\newtheorem{lemma}{Lemma}[section]
\newtheorem{theorem}[lemma]{Theorem}
\newtheorem{corollary}[lemma]{Corollary}
\newtheorem{proposition}[lemma]{Proposition}
\newtheorem{conjecture}[lemma]{Conjecture}
\theoremstyle{definition}
\newtheorem{definition}[lemma]{Definition}
\newtheorem{remark}[lemma]{Remark}
\newtheorem{example}[lemma]{Example}
\theoremstyle{remark}
\newtheorem*{remark*}{Remark}
\newtheorem*{note*}{Note}
\newcommand{\CH}{\operatorname{CH}}
\newcommand{\Lie}{\operatorname{Lie}}
\newcommand{\eu}{\operatorname{eu}}
\newcommand{\GL}{\operatorname{GL}}
\newcommand{\codim}{\operatorname{codim}}
\newcommand{\Spec}{\operatorname{Spec}}
\newcommand{\BR}{\operatorname{BR}}
\newcommand{\bU}{{\mathbf U}}
\newcommand{\bW}{{\mathbf W}}
\newcommand{\bV}{{\mathbf V}}
\newcommand{\bY}{{\mathbf Y}}
\newcommand{\bX}{{\mathbf X}}
\newcommand{\G}{\mathbb{G}}
\newcommand{\CC}{\mathbb{C}}
\newcommand{\ix}{\mathcal {X}}
\newcommand{\iv}{\mathcal {V}}
\newcommand{\iu}{\mathcal {U}}
\newcommand{\iz}{\mathcal {Z}}
\newcommand{\iy}{\mathcal {Y}}
\newcommand{\bx}{\mathbf {X}}
\newcommand{\bv}{\mathbf {V}}
\newcommand{\bu}{\mathbf {U}}
\newcommand{\bz}{\mathbf {Z}}
\newcommand{\by}{\mathbf {Y}}
\newcommand{\bm}{\mathbf {M}}
\newcommand{\bn}{\mathbf {N}}
\newcommand{\ZZ}{\mathbb{Z}}
\newcommand{\QQ}{\mathbb{Q}}
\newcommand{\A}{\mathbb{A}}
\newcommand{\Pro}{\mathbb{P}}
\newcommand{\spec}{{\operatorname{Spec}}}
\newcommand{\Sym}{\operatorname{Sym}}
\newcommand{\Aut}{\operatorname{Aut}}
\newcommand{\Isom}{\operatorname{Isom}}
\newcommand{\Stab}{\operatorname{Stab}}
\newcounter{item-counter}
\begin{document}
\title[Strong regular embeddings and hypertoric geometry]{Strong regular embeddings of Deligne-Mumford stacks and
  hypertoric geometry}

\author{Dan Edidin}
\address{Department of Mathematics, University of Missouri-Columbia, Columbia, Missouri 65211}
\email{edidind@missouri.edu}
\thanks{The author's research was supported in part by a Simons collaboration grant.}

\begin{abstract}
We introduce the notion of {\em strong regular embeddings} 
of Deligne-Mumford stacks. These morphisms naturally arise in the related contexts of generalized Euler sequences and hypertoric geometry.
 \end{abstract}

\maketitle


\section{Introduction}
Let $G$ be a finite group acting on affine schemes $X = \Spec A$
and $Y = \Spec B$ and let $f \colon Y \to X$ be a $G$-equivariant morphism. 
Since $f$ is $G$-equivariant there is an induced map
of invariant subrings $A^G \to B^G$ corresponding to a  morphism of quotients
$g \colon Y/G \to X/G$. Certain algebro-geometric
properties of the morphism $f$ (typically related to finiteness)
are automatically preserved by the morphism $g$. For example, if $f$
is finite then the induced morphism of quotients $g \colon Y/G \to X/G$
is also finite. Likewise if $|G|$ is a unit in $\Spec B$ 
and $f \colon Y \to X$ is a closed embedding then $g \colon Y/G \to X/G$
is as well.

On the other hand, many properties of the morphism $f$ will not
descend. If $f$ is flat or smooth the induced morphism of
quotients need not be.  Instead we can impose additional
conditions on the actions of $G$ on $X$ and $Y$ to ensure that a
property of morphisms of schemes does descend to the quotient. Two
obvious conditions that suffice are that $G$ act freely or that
$G$ act trivially on both spaces.

Note, however, that these conditions are not necessary. For example
if $Y = X \times Z$ and $G$ acts trivially on $Z$ then 
$Y/G = X/G \times Z$ so the flat projection $Y \to X$ descends
to a flat projection $Y/G \to X/G$
and the diagram
$$\xymatrix{Y \ar[r] \ar[d] & \ar[d] X\\Y/G \ar[r] & X/G}$$
is cartesian. 

This is an example of a {\em stabilizer preserving morphism}, meaning
that for every point $y \in Y$ the map of stabilizers $\Stab_y Y \to
\Stab_{f(y)} X$ is an isomorphism of groups. A well known folklore
theorem states that an \'etale stabilizer preserving morphism is {\em
  strongly \'etale}.  This means that the morphism of quotients $Y/G
\to X/G$ and the diagram
$$\xymatrix{Y \ar[r] \ar[d] & \ar[d] X  \\Y/G \ar[r] & X/G}$$
is cartesian.

In this paper we turn our attention to the problem of determining when
a regular embedding $i \colon Y \hookrightarrow X$ descends to a regular embedding of quotients $j \colon Y/G \hookrightarrow  X/G$ (necessarily of codimension
$d$ since $\dim Y = \dim Y/G$ and $\dim X = \dim X/G$). 
The $G$-equivariant morphism $Y \to X$ is a local model for a morphism
$\iota \colon \iy \to \ix$ of Deligne-Mumford stacks. We say that a morphism of Deligne-Mumford stacks is a {\em strong regular embedding} if the induced
morphism of coarse spaces $Y \to X$ is a regular embedding and
the diagram of stacks and spaces 
$$\xymatrix{ \ar[r] \ar[d] \iy & \ix \ar[d]\\X \ar[r] & Y}$$
is cartesian.

Although any immersion of stacks is stabilizer preserving, not every regular
embedding is strong (Example \ref{ex.strong}). 
Our first result, Theorem \ref{thm.strongembedd}, is
a characterization of strong regular embeddings. When $\iy \hookrightarrow
\ix$ is a strong regular embedding of smooth stacks we use Artin's approximation theorem \cite{Art:69} to prove that the induced morphism 
of coarse spaces $Y \hookrightarrow X$ is a \'etale locally the section
of a smooth morphism. As a corollary we prove that if $\tilde{X} \to X$
is a functorial resolution of singularities (in the sense of 
\cite{Kol:07}) then the fiber product $\tilde{Y} = Y \times_X \tilde{X}$
is a resolution of singularities of $Y$. 

We also prove that the pullback on Chow groups $(I\iota)^*
\colon \CH^*(I\ix)\to \CH^*(I\iy)$ commutes with the Chen-Ruan orbifold product
thereby giving a ring homomorphism of orbifold Chow rings 
$\CH^*_{orb}(\ix) \to \CH^*_{orb}(\iy)$.

\subsection{Applications}
Strong regular embeddings arise in two related contexts, generalized
Euler sequences and hypertoric geometry.

\subsubsection{Generalized Euler sequences}
If $\ix = [X/T]$ is a quotient stack with $X$ a smooth scheme defined over a field
$k$ 
and $T$ is a torus acting with finite stabilizer
then $\ix$ is a smooth Deligne-Mumford stack whose coarse space is a scheme. 
The cotangent bundle $T^*\ix$
fits into a generalized Euler sequence of vector bundles on $\ix$
$$ 0 \to T^*\ix \to [T^*X/T] \to \ix \times \Lie(T)^* \to 0$$
which implies by Theorem \ref{thm.strongembedd} that the inclusion
$T^*\ix \to [T^*X/T]$ is a strong regular embedding. We prove that if
$\bm$ and $\bn$ are the coarse spaces of $T^*\ix$ and $[T^*X/T]$
respectively then the regular embedding $i \colon \bm \to \bn$ induces an
isomorphism of integral Chow groups $i^* \colon \CH^*(\bn) \to
\CH^*(\bm)$.  We also conjecture that if $\tilde{\bn}\to \bn$ is a
functorial resolution of singularities, pullback along the
morphism $\tilde{\bm} \to \tilde{\bn}$ obtained by base change induces an
isomorphism of integral Chow rings $\CH^*(\tilde{\bn}) \to
\CH^*(\tilde{\bn})$. Finally we prove that the pullback $(I\iota)^*
\colon \CH^*(I[T^*X/T]) \to \CH^*(IT^*\ix)$ induces an isomorphism of
orbifold Chow rings.

\subsubsection{Hypertoric stacks}
Hypertoric varieties were first defined by Bielawski and Dancer
\cite{BiDa:00} and give a large and interesting class of algebraic
symplectic manifolds with complete hyperKahler metrics. Algebraic
symplectic manifolds naturally occur in a variety of mathematical
contexts including representation theory (Springer resolutions,
Nakajima quiver varieties, Slodowy slices), string theory (moduli spaces of Higgs bundles) and mirror
symmetry (Gromov-Witten theory of K3 surfaces and $T^*{\mathbb P}^1$ ).

The theory was further developed by a number of authors including
Hausel and Sturmfels (\cite{HaSt:02}) and Proudfoot
(\cite{Pro:08,Pro:11}.

Let $T$ be a torus of rank $d$ and let $V$ be an $n$-dimensional representation of
$T$ with $d \times n$ weight matrix $A = (a_{ij})$. 
There is a natural algebraic moment map $\mu \colon 
V \times V^* \to \Lie(T)^*$
and the hypertoric variety ${\mathbf Y}(A,\theta)$
is defined as the double reduction $(V \times V^*)////T$
where the first reduction is an algebraic symplectic reduction with respect
to $\mu$ and the second reduction is a GIT quotient with respect to the torus
action linearized with respect to a character $\theta \in X(T)$. 

The hypertoric variety ${\mathbf Y}(A,\theta)$ naturally
embedds in a {\em Lawrence toric variety}
${\mathbf X}(A^\pm, \theta)$
which is the GIT quotient $(V \times V^*)//T$. Following Jiang and Tseng
\cite{JiTs:08a} we refer to the corresponding quotient stack
$\iy(A,\theta)$ (resp. $\ix(A^\pm, \theta)$)
as a {\em hypertoric stack} (resp. {\em Lawrence toric stack}). 
If the character $\theta$ is generic then the stacks $\iy(A,\theta)$
and $\ix(A^\pm, \theta)$ are Deligne-Mumford and the 
corresponding varieties are their coarse spaces.

In Section \ref{sec.hyperkahler} we prove that the embedding
$\iy(A,\theta) \hookrightarrow \ix(A^\pm,\theta)$ is a strong regular
embedding. This implies
that the morphism ${\mathbf Y}(A,\theta) \to {\mathbf X}(A^\pm,\theta)$
is a regular embedding which
is \'etale locally a section of smooth morphism, so
that any functorial resolution of singularities of the Lawrence 
toric stack ${\mathbf X}(A^\pm,\theta)$
pulls back to resolution of singularities of the hypertoric variety
${\mathbf Y}(A,\theta)$. 

We also prove (Theorem \ref{thm.chowhypertoric}) that the inclusion morphism $\iota \colon \iy(A,\theta)
\hookrightarrow \ix(A^\pm, \theta)$ induces an isomorphism of
integral Chow rings $\CH^*(\ix(A^\pm,\theta) \to \CH^*(\ix(A,\theta))$
and integral Chow groups $\CH^*({\mathbf X}(A^\pm,\theta)) \to
\CH^*({\mathbf Y}(A,\theta))$.

As a corollary of Theorems \ref{thm.chowhypertoric} and \ref{thm.orbifoldproducts} we also prove
(Corollary \ref{cor.hypertoricorbifold}) that the inclusion of inertia stacks
$I\iy(A,\theta) \to I\ix(A^\pm, \theta)$ induces an isomorphism
of integral orbifold Chow rings. With rational coefficients
this result was previously obtained by Jiang and Tseng \cite{JiTs:08a}.

\subsection{Conventions and notation}

\subsubsection{Generalities on stacks} 
Unless otherwise stated we work with Deligne-Mumford stacks of finite
type (and hence finite presentation) over a Noetherian scheme
$S$.

Let $\ix$ be a Deligne-Mumford stack.
If $T$ is an $S$-scheme and $T \stackrel{x} \to \ix$ is a $T$-valued
point then we use the notation $\Aut(x)$ to denote the $T$-group
$\Isom_T(x,x)$. Following Abramovich and Vistoli \cite{AbVi:02}
we say that a Deligne-Mumford stack is {\em tame}
if for every geometric point $\spec \;k \stackrel{x} \to \ix$ 
of $\ix$, the finite group $\Aut(x)$ is linearly reductive over $\spec\; k$.
This is equivalent to saying that $|\Aut(x)|$ is prime to the characteristic 
of $k$. 

If $\ix$ is a Deligne-Mumford stack we let $I\ix$ denote the {\em inertia
stack} $\ix \times_{\ix \times \ix} \ix$ where the fiber product is
taken with respect to the diagonal $\ix \to 
\ix \times \ix$. 
Because $\ix$ is Deligne-Mumford the natural morphism $I\ix \to \ix$
is unramified. The fiber of $I\ix \to \ix$
over a $T$-valued point $T \stackrel{x} \to \ix$
is the  $T$-group $\Aut(x)$. We denote by $I^2\ix$
the {\em double inertia} stack defined as the fiber product $I\ix \times_\ix I\ix$. Note that $I^2\ix$ is not in general equivalent to the stack
$I(I\ix)$. 

A Deligne-Mumford stack $\ix$ has {\em finite stabilizer} if
the morphism $I\ix \to \ix$ is finite. The stack
is separated if the diagonal $\ix \to \ix \times \ix$ is finite.
A fundamental theorem 
of Keel and Mori implies that any Deligne-Mumford stack with finite stabilizer
has a coarse space $X$.

An algebraic space $X$ is a {\em coarse space} of a stack $\ix$
if there is a morphism $\ix \stackrel{\pi} \to X$ such that \\
$(i)$ $\pi$ is initial for maps from $\ix$ to algebraic spaces;
ie for any morphism $f \colon \ix \to Z$ with $Z$ an algebraic space
there is a unique morphism of algebraic spaces $g \colon X \to Z$
such that the morphism $f$ factors as $g \circ \pi$

$(ii)$ $\pi$ induces a bijection between geometric points of $\ix$
and geometric points of $X$.

The universal property $(i)$ implies that the coarse space is unique
up to (unique) isomorphism of algebraic spaces.

A stack $\ix$ is a {\em quotient stack} if it is equivalent to a stack
of the form $[X/G]$ where $X$ is an algebraic space and $G$ is flat
closed subgroup scheme of $\GL_n(S)$ for some $n$. Equivalently we may assume
$\ix = [X'/\GL_n(S)]$ where $X'$ is the algebraic space $(X \times \GL_n(S))/G$.

If $\ix=[X/G]$ is a quotient stack then the coarse space of $\bX$ 
of $\ix$ is the geometric quotient $X/G$ in the category of algebraic spaces.

\subsubsection{Chow groups}
If $X$ is a scheme defined over a field we denote by
$\CH_d(X)$ the group of $d$-dimensional cycles modulo
rational equivalence. Likewise we denote by $\CH^d(X)$ the group
of codimension-$d$ cycles modulo rational equivalence. When
$X$ is equidimensional of dimension $n$,  $\CH_d(X) = \CH^{n-d}(X)$.
Set $CH_*(X) = \oplus_d CH_d(X)$ and $CH^*(X) = \oplus_d CH^d(X)$.
If $f \colon Y \to X$ is proper then there is a pushforward 
$f_* \colon CH_d(Y) \to CH_d(X)$. Likewise if $f$ is flat, or an lci morphism
there is a pullback $f^*\colon CH^d(X) \to CH^d(Y)$. When
$X$ is smooth, the diagonal is a regular embedding and pullback
along the diagonal defines a graded ring structure on $CH^*(X)$.

This definition of Chow groups formally extends to algebraic spaces.

If $G$ is a linear algebraic group acting on an algebraic space $X$
then we can define equivariant Chow groups \cite{EdGr:98}.  The group
$\CH^d_G(X)$ is defined as $\CH^d((X \times U)/G$ where $U$ is an open
set in a representation $V$ of $G$ such that $G$ acts freely on $U$
and $\codim (V \smallsetminus U) > d$. This definition is independent
of the representation $V$ and the open set $U$.

If $\ix$ is a Deligne-Mumford stack then $\CH^d(\ix)$ denotes the Chow group
in codimension $d$ defined by Kresch in \cite{Kre:99a}. Typically we will work with
quotient stacks $\ix = [X/G]$. In this case $\CH^d(\ix) = CH^d_G(X)$
where $CH^d_G(X)$ denotes the equivariant Chow group defined above.
Note that $CH^d(\ix)$ can be non-zero for $d > \dim \ix$, although
this group will be torsion. Likewise the group $CH_d(\ix)$ can be non-zero
for negative $d$.

Bloch defined for a scheme $X$, {\em higher Chow groups} $\CH_d(X,k)$
parametrizing cycles of dimension $d+k$ on $X \times \Delta^k$ which
intersect the faces of $\Delta^k$ transversely. (Here $\Delta^k$
denotes the algebraic $k$-simplex $\Spec k[t_0, \ldots , t_k]/(t_0 +
\ldots + t_k - 1)$.) In this definition $CH^d(X,0) = CH^d(X)$.

A key property of higher Chow groups proved by Bloch \cite{Blo:86}
and Levine \cite{Lev:01} is that if $X$ is an arbitrary scheme
and $Z \subset X$ is a closed subscheme with complement $U$
there is a localization long exact sequence
\begin{equation} \label{eq.localization}
\ldots \to CH_d(Z,k) \to CH_d(X,k) \to CH_d(U,k)\to \ldots
\end{equation}
extending the classical localization sequence
$$CH_d(Z) \to CH_d(X) \to CH_d(U) \to 0$$
for ordinary Chow groups.
\subsection{Acknowledgments}
The author is grateful to Daniel Bergh and Daniel Lowengrub for helpful discussions.

\section{Definitions and statements of results}
\begin{definition} \label{def.strongreg} A morphism of Deligne-Mumford 
stacks $\iota
  \colon \iy \to \ix$ is said to be a {\em strong regular embedding}
 if the induced morphism of coarse spaces $i 
  \colon Y \to X$ is a regular embedding and the diagram
$$\begin{array}{ccc}
\iy & \stackrel{\iota} \to & \ix\\
\downarrow & & \downarrow\\
Y & \stackrel{i} \to & X
\end{array}$$
is cartesian.
\end{definition}

Our first result is the following characterization of strong regular embeddings:
\begin{theorem} \label{thm.strongembedd}
Let $\iota \colon \iy \to \ix$ be a regular embedding of tame Deligne-Mumford
stacks and let $Y \to X$ be the corresponding embedding of coarse spaces.  The following conditions
are equivalent:\\

(i) $\iota \colon \iy \to \ix$ is a strong regular embedding.

(ii) There is a stratification of $\iy$ by locally closed substacks
such that the normal bundle $N_\iota$ restricted to each stratum is trivial.

(ii') If $Y$ is a scheme then $N_\iota$ is locally trivial in the Zariski topology on $\iy$.

(iii) For every geometric point $\Spec k \stackrel{y} \to \iy$ the pullback
of $(N_\iota)_y$  is a trivial representation of
the inertia group $G_y = \Aut(y)$.

(iv) For every geometric point $\Spec k \stackrel{y} \to \iy$
there is a strongly \'etale morphism $[U/G_y] \to \ix$ with $U$ affine such
that normal bundle to the regular embedding 
$\iy \times_\ix [U/G] \hookrightarrow [U/G]$ is trivial.

(v) 
For every geometric point $\Spec k \stackrel{y} \to \iy$
there is a strongly \'etale morphism $[U/G_y] \to \ix$ where $U$ is an affine scheme such that $\iy \times_\ix U$
is defined by a $G$-fixed regular sequence in ${\mathcal O}(U)$.
\end{theorem}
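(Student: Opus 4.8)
The plan is to reduce everything to a local model via strongly \'etale charts and then to commutative algebra governed by the linear reductivity supplied by tameness. At a geometric point $\Spec k \stackrel{y}{\to} \iy$ with $G = G_y = \Aut(y)$, I would invoke the folklore local structure theorem quoted in the introduction (an \'etale stabilizer-preserving morphism is strongly \'etale) to produce a strongly \'etale morphism $[U/G] \to \ix$ with $U$ affine, $G$ linearly reductive, and a $G$-fixed point $w \in U$ lying over $y$. In this chart $\iy$ pulls back to $W = \iy\times_\ix U$, a $G$-stable closed subscheme cut out by a $G$-submodule ideal $I \subset \mathcal{O}(U)$; the coarse space $X$ becomes $\Spec \mathcal{O}(U)^G$ and $Y$ becomes $\Spec \mathcal{O}(U)^G/I^G$ (using exactness of invariants, $\mathcal{O}(W)^G = \mathcal{O}(U)^G/I^G$ with $I^G = I\cap\mathcal{O}(U)^G$). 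Two translations drive the proof: the square is cartesian in this chart if and only if $I = I^G\,\mathcal{O}(U)$, i.e. $I$ is generated by invariants; and $N_\iota$ restricted to the chart is $I/I^2$ with its natural $G$-action, whose fiber at $w$ is the representation $(N_\iota)_y$.

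With these translations the easy implications are formal. For (i) $\Rightarrow$ (iii): base-changing the cartesian square along the strongly \'etale chart keeps it cartesian, so $I = I^G\mathcal{O}(U)$; then $I/I^2$ is generated by images of invariants, so every fiber, in particular $(N_\iota)_y$, is a quotient of a trivial representation, hence trivial. The implication (iv) $\Rightarrow$ (iii) is immediate (a trivial bundle has trivial fibers), and (v) $\Rightarrow$ (iv) holds because a $G$-fixed regular sequence $f_1,\dots,f_d$ trivializes $I/I^2$ $G$-equivariantly. For (v) $\Rightarrow$ (i) I would note that the Reynolds operator splits $\mathcal{O}(U)^G \hookrightarrow \mathcal{O}(U)$ as $\mathcal{O}(U)^G$-modules, so a $G$-fixed regular sequence in $\mathcal{O}(U)$ remains a regular sequence in $\mathcal{O}(U)^G$; this makes $Y \to X$ a regular embedding, and $I = (f_1,\dots,f_d)$ is generated by invariants, so the square is cartesian.

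The heart of the argument is (iii) $\Rightarrow$ (v), and it is where tameness is indispensable. Since $w$ is $G$-fixed, $\mathfrak m_w$ is $G$-stable and the sequence $0 \to \mathfrak m_w M \to M \to M(w)\to 0$ with $M = I/I^2$ stays exact after taking $G$-invariants, so $M^G \twoheadrightarrow M(w)^G = M(w)$, the last equality because the fiber is a trivial representation by (iii). Lifting a $k$-basis of $M(w)$ to invariant sections $\bar f_1,\dots,\bar f_d \in (I/I^2)^G$, and then lifting further through the exact-on-invariants sequence $0\to I^2\to I\to I/I^2\to 0$ to invariants $f_1,\dots,f_d \in I^G$, I obtain $d$ invariant elements that generate $I/I^2$ and hence, after shrinking to a $G$-invariant affine neighborhood of $w$ (the principal opens defined by invariant functions suffice) and applying equivariant Nakayama, generate $I$. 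Since $\iota$ is a regular embedding of codimension $d$, any $d$ generators of $I$ form a regular sequence, so $f_1,\dots,f_d$ is the desired $G$-fixed regular sequence. I expect arranging that the invariant sections spread out to generate the whole conormal module after an equivariant shrink to be the main obstacle.

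Finally I would close the remaining equivalences using (v). Having (v) locally, the invariant regular sequence $f_1,\dots,f_d$ is simultaneously a basis of $I^G/(I^G)^2$ and of $I/I^2$, which identifies $N_\iota$ with the pullback $\pi^* N_i$ of the normal bundle of $Y\hookrightarrow X$ along $\pi\colon\iy\to Y$. Since any vector bundle on the Noetherian algebraic space $Y$ is trivial on each piece of a finite stratification by locally closed subspaces, pulling such a stratification back to $\iy$ yields (ii), while (ii) $\Rightarrow$ (iii) is immediate. When $Y$ is a scheme, $N_i$ is Zariski-locally free on $Y$, so its pullback $N_\iota$ is Zariski-locally trivial on $\iy$, giving (ii'). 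This completes the cycle (i) $\Rightarrow$ (iii) $\Rightarrow$ (v) $\Rightarrow$ (i) together with (iii) $\Leftrightarrow$ (iv), (iii) $\Leftrightarrow$ (ii), and (ii').
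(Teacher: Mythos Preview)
Your proposal is correct and follows essentially the same strategy as the paper: reduce to strongly \'etale affine charts $[U/G_y]$ via the local structure theorem for tame Deligne--Mumford stacks, then use linear reductivity of $G_y$ (exactness of invariants, Reynolds operator) to carry out the commutative algebra. The paper organizes the implications as the single cycle (i) $\Rightarrow$ (ii) $\Rightarrow$ (iii) $\Rightarrow$ (iv) $\Rightarrow$ (v) $\Rightarrow$ (i): it gets (ii) directly from the cartesian square (so $N_\iota = \pi^*N_i$, then stratify $Y$), and it passes from (iii) to (iv) by arguing that the normal bundle, once non-equivariantly trivialized on a $G$-invariant neighborhood of the fixed point, has an isotypic decomposition $\bigoplus_V V\otimes\mathcal O_U^{r_V}$ forced to be trivial by the fiber, before applying the Reynolds operator for (iv) $\Rightarrow$ (v). Your route is slightly more direct: you prove (iii) $\Rightarrow$ (v) in one step by lifting a basis of the trivial fiber $M(w)$ through the exact-on-invariants sequences $0\to\mathfrak m_w M\to M\to M(w)\to 0$ and $0\to I^2\to I\to I/I^2\to 0$, then shrinking equivariantly and applying Nakayama. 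Both arguments rest on the same ingredients; yours bypasses the isotypic-decomposition step, while the paper's ordering makes the deduction of (ii) from (i) immediate without first passing through (v).
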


\subsection{Strong regular embeddings and resolutions of singularities}
Given a strong regular embedding $\iota \colon \iy \to \ix$ of smooth
Deligne-Mumford stacks we obtain a regular embedding of the possibly singular
coarse spaces $Y \to X$. 
Our next result shows that this morphism is rather distinguished among regular embeddings in that it is \'etale locally a section of a smooth morphism.
\begin{theorem} \label{thm.localsection}
Let $\iy \to \ix$ be a strong regular embedding of smooth Deligne-Mumford stacks
and let $Y \to X$ be the induced regular embedding of coarse spaces. Then
for any point $y$ of $Y$ there are \'etale neighborhoods $W \to Y$ of $y$
in $Y$ and $Z \to X$ of $y$ in $X$
such that the morphism $W \to X$ factors as $W \to W' \to Z \to X$
where $W'\to Z$ 
a section of a smooth morphism $Z \to W'$ and the morphism $W \to W'$ is
\'etale.
\end{theorem}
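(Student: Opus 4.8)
The plan is to reduce to an explicit $G$-equivariant local model, construct a $G$-equivariant formal splitting of the ambient stack along $\iy$ in which $G$ acts trivially on the normal directions, descend this splitting to the singular coarse spaces by taking invariants, and finally invoke Artin approximation \cite{Art:69} to convert the resulting formal isomorphism into the asserted \'etale-local factorization. I begin with the local structure. Fixing $y$ and its image in $\ix$, condition (v) of Theorem \ref{thm.strongembedd} supplies a strongly \'etale morphism $[U/G_y] \to \ix$ with $U = \Spec A$ smooth affine and $G = G_y$ finite, tame, such that $V := \iy \times_\ix U = \Spec\bigl(A/(f_1,\dots,f_d)\bigr)$ for a $G$-fixed regular sequence $f_1,\dots,f_d \in A$. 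Since the square defining a strong regular embedding is cartesian, the coarse spaces are, \'etale-locally, $X = \Spec A^G$ and $Y = \Spec(A/(f_i))^G$; because $G$ is linearly reductive the functor $(-)^G$ is exact, so $(A/(f_i))^G = A^G/(\bar f_1,\dots,\bar f_d)$ and the $\bar f_i \in A^G$ cut out $Y$ in $X$.

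The heart of the argument is a $G$-equivariant formal tubular neighborhood. I would first produce, order by order, a $G$-equivariant formal retraction $r \colon \widehat U_V \to V$ splitting the regular embedding $V \hookrightarrow U$ of smooth schemes; the successive obstructions lie in $G$-modules on which invariants may be taken, and exactness of $(-)^G$ lets one choose each lift $G$-equivariantly. Combining $r$ with the map $(f_1,\dots,f_d)\colon U \to \A^d$ yields a $G$-equivariant isomorphism $\widehat U_V \cong \widehat{(V\times \A^d)}_{V\times 0}$ over $V$, where the decisive point is that $G$ acts \emph{trivially} on the $\A^d$-factor: the transverse coordinates are the $G$-invariant functions $f_i$, which is precisely the triviality of the normal representation recorded in condition (iii). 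Taking $G$-invariants now descends this to a formal isomorphism $\widehat X_Y \cong \widehat{(Y\times \A^d)}_{Y\times 0}$ compatible with the projection to $Y$ and the zero-section $Y \hookrightarrow X$; here one uses that $A$ is finite over $A^G$, so completion is a flat base change along which $(-)^G$ commutes, together with $(V\times \A^d)/G = (V/G)\times \A^d = Y\times \A^d$. Thus \emph{formally} along $Y$, the embedding $Y \hookrightarrow X$ is the zero-section of the smooth projection $Y\times\A^d \to Y$, i.e.\ the section of a smooth morphism.

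It remains to pass from this formal statement to an \'etale-local one. I would apply Artin approximation to the isomorphism functor between $X$ and $Y\times\A^d$ over $Y$: the formal isomorphism above is a formal solution of a finite-type system, so \cite{Art:69} provides an \'etale neighborhood $Z \to X$ of $y$ and an \'etale neighborhood $W' \to Y$ together with an isomorphism $Z \cong W' \times_Y \A^d$ matching the section; choosing an \'etale neighborhood $W \to Y$ refining $W'$ gives the factorization $W \to W' \to Z \to X$ in which $Z \to W'$ is the smooth projection, $W' \to Z$ its section, and $W \to W'$ \'etale, as required.

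The main obstacle is the equivariant formal splitting together with its compatibility with Artin approximation. Constructing the retraction $r$ $G$-equivariantly and verifying that the $\A^d$-direction carries the trivial action — so that invariants genuinely produce a \emph{smooth} projection on the singular coarse space rather than merely a regular embedding — is the crux; and one must set up the approximation for an isomorphism of \emph{singular} spaces along a closed subscheme, which is exactly why the conclusion allows the intermediate \'etale $W \to W'$ (the approximated model is \'etale over $Y$, not a Zariski neighborhood of it) rather than an isomorphism onto a neighborhood in $X$.
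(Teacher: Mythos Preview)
Your proposal is correct and follows essentially the same route as the paper: reduce via the local structure theorem and condition (v) to a $G$-fixed regular sequence, exhibit a $G$-equivariant formal splitting with trivial action on the transverse $\A^d$, take invariants (using linear reductivity to commute $(-)^G$ with completion), and invoke Artin approximation \cite{Art:69} to pass from formal to \'etale. The only cosmetic differences are that the paper completes at a single $G$-fixed point $O$ rather than along the whole of $V$, and obtains the formal isomorphism $\hat A \simeq \widehat{A/I}[[\bar f_1,\dots,\bar f_r]]$ directly from smoothness of $V$ rather than building the retraction order by order; it then applies Artin approximation twice (once to get a common \'etale neighborhood of $X$ and $Y\times\A^r$, and once more to relate the induced section back to $Y$), which is the bookkeeping your final paragraph anticipates.
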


As an application of Theorem \ref{thm.localsection}
we can show
that the induced morphisms of coarse moduli spaces has a strong functorial property with respect to resolutions of singularities.
\begin{corollary} \label{cor.resolution} Let $\iota \colon \iy \to
  \ix$ be a strong regular embedding of smooth, tame Deligne-Mumford 
stacks defined over a perfect field $k$.  Then,
  if $\tilde{X}$ is a canonical (functorial) resolution of
  singularities of $X$ then $Y \times_X \tilde{Y}$ is a a resolution
  of singularities of $Y$.
\end{corollary}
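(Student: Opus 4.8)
The plan is to check the three defining properties of a resolution — smoothness of the total space, properness of the structure map, and birationality (more precisely, being an isomorphism over the smooth locus of $Y$) — reducing the first and third to an \'etale-local computation supplied by Theorem \ref{thm.localsection}. First observe that properness of $\tilde{Y} = Y\times_X\tilde{X}\to Y$ is automatic, as it is the base change of the proper morphism $\tilde{X}\to X$. Next, since $Y$ is the coarse space of a smooth tame Deligne--Mumford stack, it is \'etale locally of the form $U/G$ with $U$ smooth and $|G|$ prime to $\characteristic k$; hence $Y$ is normal, so its smooth locus $Y^{\ns}$ is dense and open. It therefore suffices to prove that $\tilde{Y}$ is smooth over $k$ and that $\tilde{Y}\to Y$ restricts to an isomorphism over $Y^{\ns}$.

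The key input is the compatibility of a functorial resolution $\mathcal{R}$ (in the sense of \cite{Kol:07}, so $\tilde{X}=\mathcal{R}(X)$) with smooth morphisms: for $p\colon Z\to W'$ smooth one has $\mathcal{R}(Z)=Z\times_{p,W'}\mathcal{R}(W')$, and $\mathcal{R}$ is an isomorphism over the smooth locus. I would first record the resulting fact that pulling $\mathcal{R}$ back along a section recovers the resolution of the base: if $s\colon W'\to Z$ is a section of such a $p$, then using $p\circ s=\id_{W'}$ a direct fiber-product computation gives
\[ W'\times_{s,Z}\mathcal{R}(Z)\;=\;W'\times_{s,Z}\bigl(Z\times_{p,W'}\mathcal{R}(W')\bigr)\;\cong\;\mathcal{R}(W'). \]

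Now fix an arbitrary point $y\in Y$ and take the \'etale neighborhoods $W\to Y$, $Z\to X$ and the factorization $W\to W'\to Z\to X$ furnished by Theorem \ref{thm.localsection}, where $W'\to Z$ is a section of a smooth morphism $Z\to W'$ and $W\to W'$ is \'etale. Since $\mathcal{R}$ commutes with base change along the \'etale (hence smooth) morphisms $Z\to X$ and $W\to W'$, the section computation above assembles into
\[ W\times_Y\tilde{Y}\;=\;W\times_X\mathcal{R}(X)\;=\;W\times_Z\mathcal{R}(Z)\;=\;W\times_{W'}\bigl(W'\times_Z\mathcal{R}(Z)\bigr)\;\cong\;W\times_{W'}\mathcal{R}(W')\;\cong\;\mathcal{R}(W). \]
Thus, over a cover of $Y$ by such \'etale neighborhoods, $\tilde{Y}$ pulls back to the canonical resolution $\mathcal{R}(W)$, which is smooth over $k$ and is an isomorphism over the smooth locus of $W$. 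Because smoothness over $k$ descends along the \'etale surjective family $\{\mathcal{R}(W)=W\times_Y\tilde{Y}\to\tilde{Y}\}$, the space $\tilde{Y}$ is smooth; and because the smooth locus of $W$ is the preimage of $Y^{\ns}$, \'etale descent shows that $\tilde{Y}\to Y$ is an isomorphism over $Y^{\ns}$ and hence birational. Together with properness this exhibits $\tilde{Y}\to Y$ as a resolution of $Y$.

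The one genuinely delicate point is the smooth-base-change property of $\mathcal{R}$: the whole argument rests on invoking that the fixed functorial resolution commutes with smooth (in particular \'etale) morphisms, which is precisely the functoriality guaranteed by \cite{Kol:07}. Once that is granted, the local structure theorem converts the geometry of the embedding $Y\hookrightarrow X$ into the elementary section-of-a-smooth-morphism picture, and the remaining descent statements for smoothness and birationality are routine.
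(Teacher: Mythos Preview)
Your argument is correct and follows essentially the same route as the paper: invoke Theorem~\ref{thm.localsection} to obtain the local ``section of a smooth morphism'' picture, use smooth functoriality of the resolution to identify $W\times_Y\tilde{Y}$ with $\mathcal{R}(W)$, and descend. The paper packages this into a separate proposition about ``tubular'' regular embeddings and handles birationality slightly differently (it observes directly, via \cite[Theorem~17.12.1]{EGA4}, that the smooth locus of $Y$ lands in the smooth locus of $X$, so $\tilde{Y}\to Y$ is an isomorphism there), whereas you extract both smoothness and birationality from the single local identification $W\times_Y\tilde{Y}\cong\mathcal{R}(W)$; but the underlying mechanism is identical.
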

\begin{remark} To date, the existence of resolutions of
  singularities has only been established in characteristic 0.
\end{remark}

\subsection{Strong regular embeddings and orbifold products}
For stacks defined over a field we also obtain the following result about orbifold products. See
Section \ref{sec.orbifoldproducts} for the notation. The proof will be given
in Section \ref{sec.orbifoldproducts}.

\begin{theorem} \label{thm.orbifoldproducts}
 Let $\iota \colon \iy \to \ix$ be a strong regular embedding of
  smooth Deligne-Mumford stacks and let $I\iota \colon I\iy \to I\ix$
be the induced morphism of inertia stacks.  
The pullback $(I\iota)^* \colon \CH^*(I\ix) \to
  \CH^*(I\iy)$ commutes with the orbifold products on $\CH^*(I\ix)$ and
  $\CH^*(I\iy)$. 
\end{theorem}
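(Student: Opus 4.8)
The plan is to unwind the definition of the orbifold product on both sides and reduce the claim to three ingredients: compatibility of the evaluation maps with $\iota$, a base-change identity that lets $(I\iota)^*$ pass through the pushforward defining the product, and---the heart of the matter---an identification of the obstruction (excess) bundles. Recall that the orbifold product on $\CH^*(I\ix)$ has the shape
\[ \alpha \star \beta = e_{3*}\bigl(e_1^*\alpha \cdot e_2^*\beta \cdot \eu(\mc{R}_\ix)\bigr), \]
where $e_1,e_2,e_3 \colon I^2\ix \to I\ix$ are the three evaluation maps (with $e_3$ understood to incorporate the canonical involution of $I\ix$) and $\mc{R}_\ix$ is the obstruction bundle on the double inertia stack; the same formula on $\iy$ involves $e_1,e_2,e_3\colon I^2\iy\to I\iy$ and $\mc{R}_\iy$. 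I want to show $(I\iota)^*(\alpha\star\beta)=(I\iota)^*\alpha\star(I\iota)^*\beta$.

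First I would record that $\iota$, being a regular embedding, is a representable monomorphism and hence stabilizer preserving, so that $I\iy \cong \iy\times_\ix I\ix$ and $I^2\iy\cong\iy\times_\ix I^2\ix$, the corresponding squares over $\iota$ being cartesian. Consequently each evaluation square
\[ \begin{array}{ccc} I^2\iy & \xrightarrow{I^2\iota} & I^2\ix\\ e_i\downarrow & & \downarrow e_i\\ I\iy & \xrightarrow{I\iota} & I\ix\end{array} \]
commutes (indeed is the base change of the corresponding map on $\ix$ along $\iota$), and the evaluation maps are finite, hence proper. Functoriality of flat/lci pullback applied to these commuting squares gives $(I^2\iota)^*e_i^* = e_i^*(I\iota)^*$ for $i=1,2$. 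Moreover, since $I\iota$ is the base change of the regular embedding $\iota$, the pullback $(I\iota)^*$ \emph{is} the refined Gysin homomorphism $\iota^!$; by the commutativity of refined Gysin maps with proper pushforward in a cartesian square (Fulton, adapted to stacks by Kresch) we obtain $(I\iota)^*\circ e_{3*}=e_{3*}\circ(I^2\iota)^*$. This disposes of the passage of $(I\iota)^*$ through the pushforward.

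The crux is the comparison of obstruction bundles, and this is where the hypothesis that $\iota$ is \emph{strong} is indispensable. The bundle $\mc{R}_\ix$ is built functorially out of $T\ix$: on each sector of $I^2\ix$ it is assembled from the eigen-subbundles of $T\ix$ on which the universal automorphisms act \emph{nontrivially}, the invariant (eigenvalue-one) part contributing neither to $\mc{R}_\ix$ nor to the age shifts. Now the conormal sequence $0\to T\iy\to\iota^*T\ix\to N_\iota\to 0$ expresses $\iota^*T\ix$ as $T\iy\oplus N_\iota$ in $K$-theory, and by Theorem \ref{thm.strongembedd}(iii) the restriction of $N_\iota$ to each inertia sector is a \emph{trivial} representation of the inertia group. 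Hence for every universal automorphism the moving eigen-subbundles of $\iota^*T\ix$ coincide with those of $T\iy$, the class $N_\iota$ landing entirely in the eigenvalue-one summand. It follows that $(I^2\iota)^*\mc{R}_\ix=\mc{R}_\iy$, so in particular $(I^2\iota)^*\eu(\mc{R}_\ix)=\eu(\mc{R}_\iy)$; the same computation shows the age gradings match, so the homomorphism is graded.

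Putting the pieces together,
\[ (I\iota)^*(\alpha\star\beta)=e_{3*}(I^2\iota)^*\bigl(e_1^*\alpha\cdot e_2^*\beta\cdot\eu(\mc{R}_\ix)\bigr)=e_{3*}\bigl(e_1^*(I\iota)^*\alpha\cdot e_2^*(I\iota)^*\beta\cdot\eu(\mc{R}_\iy)\bigr)=(I\iota)^*\alpha\star(I\iota)^*\beta, \]
using the second paragraph for the first equality and the evaluation-square functoriality together with the obstruction-bundle identity for the second. The main obstacle I anticipate is the third paragraph: one must fix the precise definition of $\mc{R}_\ix$ (for instance via the logarithmic-trace description of the tangent eigenbundles) and verify rigorously that a normal bundle which is a trivial representation on every sector contributes nothing to the moving parts, and nothing to the age---this is exactly the point at which condition (iii) of Theorem \ref{thm.strongembedd} enters. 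A secondary technical check is that the refined-Gysin and proper-pushforward formalism, together with the cartesian base change for the evaluation maps, is available in Kresch's intersection theory for these (possibly non-flat) stacky maps.
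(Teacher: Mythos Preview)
Your proposal follows essentially the same route as the paper: (i) commute the evaluation pullbacks through $I\iota$ via the cartesian squares, (ii) pass $(I\iota)^*$ through $\mu_*$ by Fulton-style base change, and (iii) identify $(I^2\iota)^*{\mathscr R}_\ix={\mathscr R}_\iy$ using that the inertial action on $N_\iota$ is trivial. The paper carries out (iii) via the logarithmic-trace formula, observing that $L(g_i)(N)=0$ and $N^{g_1,g_2}=N$ force $LR(N)=0$; your eigen-subbundle description is the same computation in different words.

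The one place where you are too quick is the sentence ``since $I\iota$ is the base change of the regular embedding $\iota$, the pullback $(I\iota)^*$ is the refined Gysin homomorphism $\iota^!$.'' Being a base change does not by itself force this: by the excess intersection formula, $\iota^!$ and the intrinsic Gysin $(I\iota)^*$ differ by the Euler class of the excess bundle, and they agree only when the normal bundle of $I\iy$ in $I\ix$ equals the pullback of $N_\iota$ (equivalently, the codimensions match on every component). This is not automatic---it fails for non-smooth $\iy$ even when the embedding is strong, as the paper illustrates with $[V(xy)/\mu_2]\hookrightarrow[\A^2/\mu_2]$. The paper therefore isolates this as a separate Proposition (the normal bundle of $I\iy$ in $I\ix$, and of $I^2\iy$ in $I^2\ix$, is the pullback of $N_\iota$) and proves it on an \'etale chart by exhibiting $\widehat{\mathcal O}_{y,U^g}\simeq\widehat{\mathcal O}_{y,V^g}[[T_1,\dots,T_d]]$ with the $T_i$ fixed by $G_y$; this uses smoothness of $\iy$ together with the very triviality of the inertial action on $N_\iota$ that you invoke in your third paragraph. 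Once that proposition is in place, both your base-change identities and your obstruction-bundle comparison go through exactly as you outline. So the ``secondary technical check'' you flag at the end is in fact the same lemma needed to justify the identities in your second paragraph, and should be stated and proved up front rather than deferred.
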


As an application we obtain the following corollaries which will be proved in 
Section \ref{sec.cotangent} and Section \ref{sec.hyperkahler} respectively.
\begin{corollary} \label{cor.cotangent}
Let $T$ be a torus acting with finite stabilizer on
a smooth variety $X$ and let $\ix = [X/T]$ be the quotient stack.
 The pullback
$(I\iota)^* \colon \CH^*(I [T^*X/T]) \to \CH^*(I T^*\ix)$ 
induces an isomorphism of orbifold
Chow rings $\CH^*_{orb}([T^*X/T]) \to \CH^*_{orb}(T^*\ix)$.
\end{corollary}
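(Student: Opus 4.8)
The plan is to combine Theorem~\ref{thm.orbifoldproducts} with a direct computation of the two inertia stacks. Recall from the generalized Euler sequence
$$0 \to T^*\ix \to [T^*X/T] \to \ix \times \Lie(T)^* \to 0$$
that the cokernel bundle carries the trivial $T$-action, so by criterion (iii) of Theorem~\ref{thm.strongembedd} the inclusion $\iota \colon T^*\ix \hookrightarrow [T^*X/T]$ is a strong regular embedding of smooth Deligne-Mumford stacks. Hence Theorem~\ref{thm.orbifoldproducts} applies and $(I\iota)^*$ is a homomorphism of orbifold Chow rings. Since a bijective ring homomorphism is automatically an isomorphism, it remains only to prove that the underlying map of groups $(I\iota)^* \colon \CH^*(I[T^*X/T]) \to \CH^*(IT^*\ix)$ is an isomorphism.

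My first step would be to identify both inertia stacks as total spaces of vector bundles over $I\ix$. Writing $e \colon I\ix \to \ix$ for the natural map and $E = [T^*X/T]$, $K = T^*\ix$ for the two bundles on $\ix$, a geometric point of $I[T^*X/T]$ is a pair consisting of a point $(x,\xi)$ of the total space of $E$ together with an element $t$ of its (finite, abelian) stabilizer; such a $t$ fixes $x$ and acts on the fibre, so $\xi$ must lie in the $t$-fixed subspace. This identifies $I[T^*X/T]$ with the total space over $I\ix$ of the bundle $\mathcal E$ whose restriction to the sector indexed by $t$ is the $t$-fixed subbundle $(e^*E)^t$, and likewise $IT^*\ix$ with the total space of the corresponding subbundle $\mathcal K = (e^*K)^t \subseteq \mathcal E$. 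Under these identifications $I\iota$ is exactly the inclusion of the subbundle $\mathcal K$, and it commutes with the two projections to $I\ix$.

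The technical heart of the argument, and the step I expect to require the most care, is to show that the Euler sequence restricts to a short exact sequence
$$0 \to \mathcal K \to \mathcal E \to \mathcal O_{I\ix} \otimes \Lie(T)^* \to 0$$
on $I\ix$, with \emph{trivial} quotient. Surjectivity onto $\Lie(T)^*$ is where the finite-stabilizer hypothesis enters: for $x \in X^t$ the infinitesimal orbit directions are $T$-invariant and hence lie in the weight-zero part $(T_xX)^t$, and finiteness of the stabilizer means the orbit map $\Lie(T) \to T_xX$ is injective. Dualizing, the moment-map component $(T^*_xX)^t \to \Lie(T)^*$ is surjective with kernel the fibre of $\mathcal K$, which gives the displayed sequence with cokernel the pullback of the trivial bundle $\mathcal O_\ix \otimes \Lie(T)^*$. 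I would verify smoothness of the fixed loci $X^t$ (so that all the bundles in sight are honest vector bundles) as part of this step.

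Once the restricted Euler sequence is in hand the conclusion is formal. Since $\mathcal K \subseteq \mathcal E$ is a subbundle, the projections $\pi_{\mathcal E}$ and $\pi_{\mathcal K}$ to $I\ix$ induce isomorphisms on Chow groups by homotopy invariance for vector bundles over quotient stacks, and $\pi_{\mathcal E} \circ I\iota = \pi_{\mathcal K}$ forces $(I\iota)^* \circ \pi_{\mathcal E}^* = \pi_{\mathcal K}^*$; hence $(I\iota)^*$ is an isomorphism of groups. Finally, because $\mathcal E$ and $\mathcal K$ differ only in the weight-zero directions $\Lie(T)^*$, which contribute nothing to the age, the map $(I\iota)^*$ preserves the age grading, so together with Theorem~\ref{thm.orbifoldproducts} it is an isomorphism of graded orbifold Chow rings.
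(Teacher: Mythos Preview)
Your proposal is correct and follows essentially the same strategy as the paper: invoke Theorem~\ref{thm.orbifoldproducts} for the ring-homomorphism property, then observe that both $IT^*\ix$ and $I[T^*X/T]$ are vector bundles over the common base $I\ix$ with $I\iota$ compatible with the two projections, so homotopy invariance forces $(I\iota)^*$ to be a group isomorphism. The paper's proof is a terse two-line version of yours---it simply asserts that the inertia of a vector bundle on $\ix$ is a vector bundle on $I\ix$ without writing out your fixed-subbundle description, and it does not spell out the restricted Euler sequence (which is more than is strictly needed once you know both $\mathcal E$ and $\mathcal K$ are bundles over $I\ix$) or the age-grading check you include.
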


\begin{corollary} \label{cor.hypertoricorbifold} If $\iy(A, \theta)$
  is a hypertoric stack\footnote{See Section \ref{sec.hyperkahler}
    for notation and definitions.} and $\ix(A, \theta)$ is the
  associated Lawrence toric stack then the pullback on Chow groups
  $(I\iota)^* \colon \CH^*(\ix(A, \theta)) \to \CH^*(\iy(A, \theta))$
  induces an isomorphism of orbifold Chow rings.

The analogous results also hold for orbifold $K$-theory.
\end{corollary}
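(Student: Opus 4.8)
The plan is to combine the two principal theorems on strong regular embeddings with the explicit description of the inertia stacks of hypertoric and Lawrence toric stacks. In Section \ref{sec.hyperkahler} we have already shown that $\iota \colon \iy(A,\theta) \hookrightarrow \ix(A^\pm,\theta)$ is a strong regular embedding of smooth Deligne-Mumford stacks, so Theorem \ref{thm.orbifoldproducts} applies directly and shows that the pullback $(I\iota)^* \colon \CH^*(I\ix(A^\pm,\theta)) \to \CH^*(I\iy(A,\theta))$ is a homomorphism of orbifold Chow rings. The entire content of the corollary thus reduces to proving that this ring homomorphism is a \emph{bijection}, that is, that $(I\iota)^*$ is an isomorphism of the underlying graded abelian groups.

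To prove bijectivity I would use the fact that for these stacks the inertia stack splits as a finite disjoint union of twisted sectors. Both $\iy(A,\theta)$ and $\ix(A^\pm,\theta)$ are global quotient stacks by the torus $T$, and their inertia stacks decompose as $I\iy(A,\theta) = \coprod_g \iy_g$ and $I\ix(A^\pm,\theta) = \coprod_g \ix_g$, where $g$ ranges over the finitely many finite-order elements of $T$ that arise as stabilizers and $\iy_g$, $\ix_g$ are the corresponding fixed-point substacks. Since a regular embedding of stacks is stabilizer preserving, $I\iota$ respects this decomposition: each sector $\iy_g$ maps into the sector $\ix_g$ indexed by the same element $g$. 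The first key step is to identify these sectors, namely to check that $\ix_g$ is again a Lawrence toric stack and $\iy_g$ the associated hypertoric stack for the combinatorial datum cut out on the fixed locus, and that the restricted map $\iy_g \hookrightarrow \ix_g$ is again the inclusion of a hypertoric stack into its ambient Lawrence toric stack.

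The second step is then to apply Theorem \ref{thm.chowhypertoric} one sector at a time. For each $g$, that theorem gives that the sectorwise pullback $\CH^*(\ix_g) \to \CH^*(\iy_g)$ is an isomorphism of integral Chow groups. Summing over the finitely many sectors shows that $(I\iota)^*$ is an isomorphism of graded abelian groups, and together with the ring-homomorphism property supplied by Theorem \ref{thm.orbifoldproducts} this yields that $(I\iota)^*$ is an isomorphism of orbifold Chow rings. The orbifold $K$-theory statement then follows by the identical argument once $\CH^*$ is replaced throughout by $K$-theory and the $K$-theoretic analogues of Theorems \ref{thm.chowhypertoric} and \ref{thm.orbifoldproducts} are invoked.

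I expect the main obstacle to be the sector identification of the first step. The difficulty is combinatorial: one must track which coordinates of $V \times V^*$ are fixed by a torsion element $g$ and verify that the weight matrix $A$ restricts on the fixed locus to data of Lawrence type, keeping in mind that the Lawrence construction doubles the coordinate directions through $A^\pm$, so the fixed-point conditions interact nontrivially with the cotangent directions. Once it is established that each twisted sector of the Lawrence toric stack is itself a Lawrence toric stack with the hypertoric sector sitting inside it in the standard way, the remaining steps are formal applications of the two cited theorems.
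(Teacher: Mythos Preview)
Your proposal is correct and follows essentially the same route as the paper: reduce to bijectivity via Theorem~\ref{thm.orbifoldproducts}, decompose the inertia stacks into sectors indexed by finite-order $g \in T$, identify each sector as a Lawrence/hypertoric pair for a submatrix $A_g$, and apply Theorem~\ref{thm.chowhypertoric} sectorwise. The obstacle you flag is in fact benign: since $T$ acts diagonally, $(V^*)^g = (V^g)^*$, so $(V \times V^*)^g = V^g \times (V^g)^*$ and the fixed locus is automatically of Lawrence type for the matrix $A_g$ obtained by deleting the columns of $A$ on which $g$ acts nontrivially.
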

\begin{remark}
With rational coefficients, Corollary \ref{cor.hypertoricorbifold} was proved by direct calculation by Jiang and Tseng in \cite[Theorem 3.10]{JiTs:08a}.
\end{remark}


%


\section{Proofs of Theorems} 

\subsection{Proof of Theorem \ref{thm.strongembedd}}

\paragraph{(i)$ \implies $(ii), (ii')} Since the relative dimension of
the morphism $\iy \stackrel{\iota} \to \ix$ is necessarily the same 
as the relative dimension 
of the morphism $Y \stackrel{i} \to X$, the normal
bundle $N_\iota$ is the pullback of the normal bundle $N_i$.
If $Y$ is an algebraic
space then it has a stratification by schemes, and on each open
stratum $N_i$ is locally trivial. Refining the stratification if
necessary we obtain one where $N_i$ is trivial on each open
stratum. Pulling back to $\iy$ gives the desired conclusion about
$N_\iota$. In the case where $Y$ is a scheme then $N_i$ is locally
trivial in the Zariski topology and (ii') follows.

\paragraph{(ii) $\implies$ (iii)}
Let $\spec k \stackrel{y} \to \iy$ be a geometric point of $\iy$.  
Our hypothesis (ii) implies that the pullback of
$N_\iota$ to $BG_y$ is trivial. This means that the fiber of $N_\iota$
at $y$ is a trivial $G_y$-module which is assertion (iii).

\paragraph{(iii) $\implies$ (iv)}
By the local structure theorem for Deligne-Mumford stacks (cf. \cite[Lemma 2.2.3]{AbVi:02}), 
 every geometric point
$y \to \iy$ has a strongly \'etale neighborhood isomorphic 
to $[U/G_y]$ where $U$ is an affine scheme
and $G_y = \Aut_y(\iy)$. The pullback $N$ of $N_\iota$
to $[U/G_y]$ corresponds to a $G_y$-equivariant vector bundle $N$ on the affine
scheme $U$. Let $O$ be a point of $U$  mapping to $y$ 
so that $O$ is fixed by $G_y$. In a neighborhood of the fixed point $O$
the bundle
$N$ is non-equivariantly trivial. Taking the intersection over all $g \in G_y$
of the translates of this neighborhood we obtain a $G_y$-invariant  neighborhood of $O$ 
such that the restriction of $N$ to this neighborhood is non-equivariantly trivial.
Replacing $U$ with this neighborhood we may assume that $N$ decomposes
as $\oplus_V V \otimes {\mathcal O}_U^{r_V}$ where the sum is over the irreducible representations of $G_y$. 
By hypothesis the fiber of $N$ at the fixed point
$O$ is a trivial representation. So $N$ must be globally trivial as a $G_y$-equivariant bundle. 

\paragraph{(iv) $\implies $(v)}
Again let $O$ be a fixed point for the $G_y$ action which maps to $y$ and let
$V = \iy \times_\ix U$. Then $\iy \times_\ix [U/G_y] = [V/G_y]$.
Since $O$ is a fixed point the local ring $A = {\mathcal O}_{U,O}$
has an induced action of $G_y$. Let $B$ denote the local ring
${\mathcal O}_{V,G}$ so 
$B=A/I$. We wish to show that $I$ is generated by a $G_y$-fixed regular sequence.
By assumption $G_y$
acts trivially on the $B$-module $I/I^2$. Then
$I/I^2$ is free with basis $\overline{x}_1, \ldots
\overline{x}_r$ each of which is $G_y$-fixed. 
Let
  $x_1, \ldots , x_m$ be lifts of these elements to a regular sequence
  in $A$. By construction $gx_i \equiv x_i \mod I^2$. Since $G$ is
  linearly reductive the $G$-module has a Reynolds operator
  $\rho$. Let $y_i = \rho(x_i)$ then $y_i \equiv x_i \mod I^2$ so the
  $y_i$ form a $G_y$-fixed regular sequence that generate $I$.

\paragraph{(v) $\implies$ (i)}
Since $[U/G] \to \ix$ is strongly \'etale the corresponding
morphism $U/G \to X$ is \'etale, and to check that $\iota \colon \iy \to \ix$
is a strong regular embedding it suffices to check that
the morphism $[V/G] \to [U/G]$ is a strong regular embedding where 
$V = \iy \times_\ix U$. 

As above assume that $U = \Spec A$ and $V = \Spec A/I$. Then we must show
that the
kernel of the morphism of invariant rings
$A^G \to (A/I)^G$ is generated by a regular sequence and that $A/I =
A^G \otimes_{A^G}(A/I)^G$.

Let $x_1, \ldots , x_r\in A$ be a regular sequence of $G$-fixed elements that generate $I$. Then these elements also generate the invariant ideal $I^G$. Since $G$ is linearly reductive
$(A/I)^G = A^G/I^G$ as $A^G$-modules. Also, since $A^G$ is a subring of $A$ the sequence $x_1, \ldots , x_n$ is also regular in $A^G$. Finally, since $I$ is generated by $G$-fixed elements
$I^GA = I$ so $A/I = A \otimes_{A^G} (A/I)^G$ as claimed.

\begin{example} \label{ex.strong} We illustrate the failure of the Theorem when $\iota$ is
  not a strong regular embedding. Let $Y = \A^2$ with the action of $G
  = \pm 1$ given by $(-1)(x,y) = (-x,-y)$ and let $X =
  \{(0,0)\}$. Then the morphism of quotient stacks $BG = [X/G] \to
  [Y/G]$ is a regular embedding. However the induced morphism of
  coarse moduli spaces is the inclusion of the singular point in the
  affine quadric cone $\Spec k[x^2,y^2,xy]$.

In this example $I$ is not generated by a $G$-fixed regular sequence and $I^G$ is generated by 
$(x^2,xy,y^2)$ which do not form a regular sequence in the invariant
subring $k[x^2,xy,y^2]$.
The normal
  bundle to $\iy$ in $\ix$ is the vector bundle on $BG$ corresponding to the
  non-trivial two-dimensional representation of $G$ with weights
  $(-1,-1)$.

On the other hand if we consider the action given by $(-1)(a,b) = (-a,b)$
and let $Y = Z(y)$ then $Y$ is defined by a $G$-fixed regular sequence
and $I = (y) = I^G$ so that $A/I = \Spec k[x]$ is obtained by extension of scalars from $A^G/I^G = \Spec k[x^2]$.
\end{example}

\begin{example} The tameness assumption is crucial so that the group $G$
is linearly reductive. Let $k$ be field of characteristic 2 and let
$G=\ZZ_2$ act on $k[x,y]$ be exchanging coordinates. Let $I
=(x+y)$. Then $I$ is generated by a $G$-fixed regular sequence. 
However, the sequence of $G$-modules $0 \to I \to A \to A/I$ does not remain 
exact after taking $G$-invariants, since the map
$A^G = k[x+y, xy]  \to (A/I)^G = k[x,y]/(x+y)$ is not surjective. 
In particular the map $\Spec(A/I)^G \to \Spec A^G$ is not a closed embedding.
Hence the regular embedding of stacks $[(\Spec A/I)/G] \to [(\Spec A)/G]$
is not a strong regular embedding. 
\end{example}

\subsection{Proofs of Theorem \ref{thm.localsection} and Corollary
\ref{cor.resolution}}
\begin{proof}[Proof of Theorem \ref{thm.localsection}]
  Let $y \to \iy \hookrightarrow \ix$ be a geometric point. By the
  local structure theorem for tame Deligne-Mumford stacks there is a strongly
  \'etale neighborhood $[U/G_y,O] \to (\ix,y)$ where $U= \Spec A$ is
  affine and $O \in U$ is $G_y$-fixed.

Since the morphism $\iy \to \ix$ is representable and affine, the
inverse image $V$ of $\iy$ in $U$ is affine and the morphism
$([V/G_y],O) \to (\iy,y)$ is also strongly representable and we have a
cartesian square
$$\xymatrix{
([V/G_y],O) \ar@{^{(}->}[r]\ar[d] &  ([U/G_y],O)\ar[d]\\
(\iy,y) \ar@{^{(}->}[r] &  (\ix,y)
}$$
where the vertical maps are strongly \'etale.

Thus we are reduced to the case that $\ix = [U/G]$ with $U = \Spec A$
affine, $G$ a finite reductive group, and $\iy = [V/G]$ where $V =
\Spec A/I$ is a closed subscheme whose ideal $I=(f_1, \ldots , f_r)$ is
generated by a $G$-invariant regular sequence. Since $O \in U$ in
assumed to be $G$-fixed there is an induced action of $G$ on
$\hat{A}$, the completion of $A$ at the maximal ideal of $O$.  Since
$V$ is smooth over the ground scheme, 
there is an isomorphism of $G$-algebras $\hat{A} \simeq
\widehat{A/I}[[\overline{f}_1, \ldots \overline{f}_r]]$ where
the $\overline{f}_i$ is any lift of $f_i$ to $A$. Since $G$ acts
trivially on the $f_i$ we obtain an isomorphism $\hat{A}^G \simeq
(\widehat{A/I})^G[[\overline{f}_1, \ldots \overline{f}_r]]$.

Since $G$ is linearly reductive, $\hat{A}^G = \widehat{A^G}$ where the
completion of $A^G$ is taken at the contraction of the maximal ideal
corresponding to $O \in \Spec A$.
Thus the completion of the local ring of $X= U/G$ at the image of O is
isomorphic to a formal power series ring over the completion of the
local ring of $Y = V/G$ at the image $P$ of $O$ in $Y$. Applying
Artin's \'etale approximation theorem \cite[Corollary 2.6]{Art:69},
there is a scheme $W$ and \'etale morphisms $W \to X$ and $W \to Y
\times \A^r$ whose image contains $P$ and $P \times 0$ where $0$ is
the origin in $\A^r$.  Let $Z'= Y \times_{Y \times \A^r} Z$ so the
induced morphism $Z' \to W$ is a section of a smooth morphism.  Applying
Artin's approximation to $Z'$ and $Y$ yields \'etale morphisms $Z \to
Y$ and $Z \to Z'$ and a commutative diagram
$$\xymatrix{Z \ar[dr] \ar[r] & Z' \ar[d] \ar[r] & W \ar[d]\\
& Y \ar[r] & X}$$
\end{proof}

Corollary \ref{cor.resolution} follows from Theorem \ref{thm.localsection}
and the following proposition.

\begin{definition}
A regular embedding 
$Y \stackrel{i} \hookrightarrow X$
with the following local structure is called a {\em tubular} regular embedding
:\\
For each point of $y \in Y \subset X$ there are \'etale neighborhoods
  of $W\to X$ and $Z \to Y$ such that 
the morphism $Z \to X$ factors as $Z \to Z' \to W \to X$
where $Z' \to W$ is a section of a smooth morphism and $Z \to Z'$
is \'etale 
\end{definition}

\begin{proposition} \label{prop.resolution} Let $\BR$ be a resolution
  of singularities functor which is functorial for smooth morphisms
  \cite[Chapter 3]{Kol:07} and let $i \colon Y \to X$ be a tubular
  regular embedding. Let $\tilde{X} = BR(X)$. Then $i^*\tilde{X}$ is a
  resolution of singularities of $\tilde{X}$.  (Here $i^*\tilde{X}$
  refers to the fiber product $Y \times_X \tilde{X}$.)
\end{proposition}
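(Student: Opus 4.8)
The plan is to reduce the global statement to the local model furnished by the definition of a tubular regular embedding, and then to exploit the functoriality of $\BR$ for smooth morphisms to show that the two natural ways of resolving $Y$ — namely resolving $Y$ directly, versus resolving $X$ and pulling back along $i$ — agree étale-locally. Since being a resolution of singularities (properness, birationality onto the smooth locus, and smoothness of the source) can be checked étale-locally on the target, matching the constructions on an étale cover suffices.

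First I would fix a point $y \in Y \subset X$ and invoke the tubular structure to obtain étale neighborhoods $W \to X$ and $Z \to Y$ together with a factorization $Z \to Z' \to W \to X$, where $p \colon Z' \to W$ (written as a section of the smooth morphism, so that $Z' \hookrightarrow W$ is the image of a section of $W \to Z'$) is locally modeled on a trivial projection $Y \times \A^r \to Y$. The key computation is then to identify $\BR(X)\times_X Z'$ with $\BR(Z')$. Because $W \to X$ is étale and $\BR$ is functorial for smooth (in particular étale) morphisms, we have $\BR(X)\times_X W \cong \BR(W)$. The smooth morphism $W \to Z'$ (of which $Z' \hookrightarrow W$ is a section) then gives, again by functoriality for smooth morphisms, a canonical identification relating $\BR(W)$ to $\BR(Z')$; pulling back along the section and using that $Z \to Z'$ is étale yields $\BR(X)\times_X Z \cong \BR(Z)$. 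Since $Z \to Y$ is étale and $\BR(Z)$ is a resolution of $Z$, the fiber product $i^*\tilde X = Y\times_X \tilde X$ restricts over $Z$ to a resolution of $Z$.

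To assemble these local statements I would note that smoothness of $i^*\tilde X$, properness of $i^*\tilde X \to Y$, and the isomorphism over the smooth locus of $Y$ are all properties that descend from an étale cover; as $y$ ranges over $Y$ the neighborhoods $Z \to Y$ form such a cover, so the local identifications $i^*\tilde X \times_Y Z \cong \BR(Z)$ glue to the conclusion that $i^*\tilde X \to Y$ is a resolution of singularities. One should also check that $i$ (and hence its base change) is a regular embedding of the \emph{ambient} smooth-over locus in the sense needed for $Y$ to meet $\tilde X$ properly, which follows from $Z' \hookrightarrow W$ being a section of a smooth morphism and therefore a regular embedding with trivial normal bundle; this guarantees that the fiber product has the expected dimension and that no excess intersection occurs.

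The main obstacle I anticipate is the precise bookkeeping of functoriality: $\BR$ is only assumed functorial for \emph{smooth} morphisms, so every compatibility must be routed through the étale maps $W \to X$, $Z \to Z'$, $Z \to Y$ and the smooth morphism $W \to Z'$, never through the closed embedding $Z' \hookrightarrow W$ itself. The delicate point is arguing that $\BR$ commutes with passage to the section — i.e.\ that resolving the total space $W$ of a smooth morphism over $Z'$ and then restricting to $Z'$ computes $\BR(Z')$ — which is exactly where the factorization of $W \to Z'$ through a product with $\A^r$, combined with functoriality for the projection $Z'\times \A^r \to Z'$, must be used carefully rather than waved at.
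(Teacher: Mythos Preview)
Your proposal is correct and follows essentially the same route as the paper: pull back $\tilde X$ along the chain $Z \xrightarrow{g} Z' \xrightarrow{j} W \xrightarrow{f} X$, using functoriality of $\BR$ for the \'etale maps $f,g$ and for the smooth retraction $W\to Z'$ (so that $j^*\BR(W)=\BR(Z')$), and conclude that $h^*i^*\tilde X=\BR(Z)$ is smooth, hence $i^*\tilde X$ is smooth. The paper organizes this slightly differently---it first isolates the special case ``$i$ is a section of a smooth morphism'' as a one-line lemma, and it handles birationality globally (the smooth locus of $Y$ lands in the smooth locus of $X$ by \cite[Theorem 17.12.1]{EGA4}, so $i^*\tilde X\to Y$ is automatically an isomorphism there) rather than by \'etale descent---but the content is the same. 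Your worry in the last paragraph about factoring $W\to Z'$ through $Z'\times\A^r$ is unnecessary: the hypothesis ``functorial for smooth morphisms'' already gives $\BR(W)\cong W\times_{Z'}\BR(Z')$ directly, and restricting along the section $j$ immediately yields $\BR(Z')$.
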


\begin{proof}
First suppose that $i \colon Y \to X$ is a section of a smooth morphism
$\pi \colon X \to Y$. In this case, by functoriality, $\tilde{X} =\pi^*\tilde{Y}$ where $\tilde{Y} = BR(Y)$.
Since $\pi \circ i =1_Y$, $i^*\tilde{X} = \tilde{Y}$.

For the general case observe that if $Y \to X$ is a regular embedding
then the image of the smooth locus of $Y$ is contained in the smooth
locus of $X$. The reason is that any regular embedding with a smooth
source must have a smooth target by \cite[Theorem 17.12.1]{EGA4}.

Thus $i^*Y \to Y$ is an isomorphism over the smooth locus of $Y$. To
finish the proof we must show that $i^*Y$ is smooth. This can be done
after \'etale base change. By hypothesis we have a commutative diagram
$$\xymatrix{Z \ar[dr]^h \ar[r]^g & Z' \ar[d] \ar[r]^j & W \ar[d]^f\\
& Y \ar[r]^i & X}$$
where $j$ is a section of a smooth morphism and $f,g,h$ are \'etale.
Hence $\tilde{Z} = g^*j^*f^*\tilde{X}$.
On the other hand by commutativity $g^*j^*f^*\tilde{X} =h^*i^*\tilde{X}$.
Thus $i^*\tilde{X}$ is smooth after base change by the \'etale morphism
$h$.
\end{proof}

\begin{remark}
A result similar to Proposition \ref{prop.resolution} was proved by Daniel
Lowengrub in \cite{Low:15}.
\end{remark}

\subsection{Orbifold products and Proof of Theorem 
\ref{thm.orbifoldproducts}} \label{sec.orbifoldproducts}

\subsubsection{Definition of the obstruction class and orbifold product}
If $\ix$ is a smooth Deligne-Mumford stack with finite stabilizer then
Chen and Ruan \cite{ChRu:02} defined an exotic ring structure on the
cohomology of the inertia stack $I\ix$ called the orbifold
product. This product has been studied by many authors and extended to
both Chow groups and K-theory.  We briefly recall the definition
the orbifold product using the formalism developed in
\cite{JKK:07,EJK:10,EJK:12a}.

Denote by $I^2\ix$ the fiber product $I\ix \times_\ix I \ix$. 
Denote by $e_1$ and $e_2$ the two projections $I^2\ix \to \ix$.
Since $I^2\ix$ has the structure as a relative group scheme
over $I\ix$ there is an additional morphism $\mu \colon I^2\ix \to I\ix$
corresponding to the composition in this group.

The orbifold product on $\CH^*(I\ix)$ is defined as follows.
Given $\alpha, \beta \in \CH^*(I\ix)$,
$$
\alpha \star \beta= \mu_*\left(e_1^*\alpha \cdot e_2^*\beta \cdot 
\eu({\mathscr R_\ix}) \right)$$
where ${\mathcal R}_\ix$ is the {\em obstruction} bundle and $\eu$
denotes its top Chern class. (Note that $I\ix$ is in general not equidimensional.) This product preserves the {\em age grading} on the Chow groups of
$I\ix$. (See \cite{JKK:07} for the definition of the age grading.)

An analogous product can be defined in $K$-theory where
the symbol $\eu({\mathscr R}_X)$ refers to the $K$-theoretic Euler
class $\lambda_{-1}({\mathscr R}_X^*)$.

We denote by $\CH^*_{orb}(\ix)$ the group $\CH^*(I\ix)$ with the orbifold product
and age grading.

If we make the very mild assumption 
that $\ix = [X/G]$ with $G$ a linear algebraic group
acting with finite stabilizer on a smooth algebraic space $\ix$ then
the formalism of \cite{JKK:07, EJK:10, EJK:12a} can be used
to define the class of the obstruction bundle in $K_0(I^2\ix)$, the
Grothendieck group of vector bundles on $I^2\ix$. 
To do this we recall the definition of the logarithmic trace.

\begin{definition}\cite[Definition 4.3]{EJK:10}
Let $X$ be an algebraic space with the action
of an algebraic group $Z$ and let $V \to X$ be a $Z$-equivariant vector bundle
on $X$. Let $g$ be a finite order automorphism of order $r$
of the fibers of $V \to X$
such that the action of $g$ commutes with the action of $Z$.
Set $L(g)(V) = \sum_{k=1}^{r-1} {k\over{r}} V_k \in K_0(Z,X) \otimes \QQ$
where $V_k$ is the $e^{2\pi i k/r}$-eigenspaces for the action of
$g$. (Here $K_0(Z,X)$ denotes the Grothendieck group of $Z$-equivariant
vector bundles on $X$.)
\end{definition}
\begin{remark}
Observe that $L(g)(V) = 0$ if and only if $g$ acts trivially on the fibers
of $V \to X$.
\end{remark}

Under the assumption that $\ix = [X/G]$, $I^2\ix = [I^2_G X /G]$ where $I^2_GX =
  \{(g_1, g_2, x)| g_1 x = g_2 x\}$. Following \cite{EJK:10} $I^2_GX$
  decomposes into a disjoint sum indexed by double conjugacy
  \footnote{A double conjugacy class is the orbit of a pair $(g_1,
    g_2) \subset G \times G$ under the diagonal action of $G$ by
    conjugation.}  classes $\Psi \subset G \times G$. Specifically
  $I_GX = \coprod_\Phi I_\Phi$ where $$I_\Phi = \{(g_1,g_2,x)| g_1x =
  g_2 x = x, (g_1, g_2) \in \Phi\}.$$ If $(g_1, g_2) \in \Phi$ then
  $[I_\Phi/G] = [X^{g_1, g_2}/Z_G(g_1,g_2)]$ where $Z_G(g_1, g_2)$ is
  the subgroup of $G$ centralizing $g_1$ and $g_2$.

  On a component $[I_\Phi/G]= [X^{g_1,g_2}/Z_G(g_1,g_2)]$ the class of
  the obstruction bundle is given as an element of
  $K_0(Z_G(g_1,g_2),(X^{g_1,g_2})$ by the formula \cite[Definitions 2.2.3, 2.2.6, 2.3.3]{EJK:12a}
  \begin{equation} \label{eq.logtrace} LR({\mathbb T}) :=
L(g_1)({\mathbb T}|_{X^{g_1,g_2}}) + L(g_2)({\mathbb
        T}|_{X^{g_1,g_2}}) + L((g_1g_2)^{-1})({\mathbb T}|_{X^{g_1,g_2}}) -
        {\mathbb T}|_{X^{g_1,g_2}} + ({\mathbb
          T}|_{X^{g_1,g_2}})^{g_1,g_2}.
\end{equation}
Here ${\mathbb T}$ is the class in $K_0(G,X)$ corresponding to the
tangent bundle of the stack $\ix = [X/G]$.

\subsection{Proof of Theorem \ref{thm.orbifoldproducts}}

\begin{theorem} \label{thm.obstructionpullback}
Let $\iy \to \ix$ be a strong regular embedding and let
${\mathscr R_X}$ be the class of the obstruction bundle for
the orbifold product in $K_0(I^2\ix)$. Then 
${\mathscr R_Y} = I^2\iota^* {\mathscr R_X}$ where
${\mathscr R_Y}$ is the obstruction bundle for the orbifold
product on $\iy$ and $I^2\iota \colon I^2\iy \to I^2 \ix$ is the inclusion.
\end{theorem}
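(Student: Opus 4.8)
The plan is to reduce to a local quotient presentation and then exploit two features of strong regular embeddings: that the normal bundle $N_\iota$ carries a trivial action of the inertia groups (Theorem \ref{thm.strongembedd}(iii)), and that the logarithmic trace $L(g)$ is additive in $K$-theory and annihilates any summand on which $g$ acts trivially (the Remark following its definition). Throughout I take $\iy$ and $\ix$ smooth, so that the obstruction class is defined componentwise by the formula (\ref{eq.logtrace}).

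First I would pass to the local model. By the equivalence (i) $\Leftrightarrow$ (v) of Theorem \ref{thm.strongembedd}, every geometric point of $\iy$ admits a strongly \'etale neighborhood in which $\ix = [U/G]$ with $U = \Spec A$ affine and $G$ finite linearly reductive, and $\iy = [V/G]$ with $V = \Spec A/I$ for $I$ generated by a $G$-fixed regular sequence. Since the obstruction class is assembled from $L(g)$, the operation of taking $(g_1,g_2)$-invariants, and $\eu$, all of which commute with strongly \'etale base change, it suffices to check $I^2\iota^*{\mathscr R_X} = {\mathscr R_Y}$ in this local model. Here the tangent bundle ${\mathbb T}$ of $\ix$ is $T_U$ as a $G$-equivariant bundle, and the regular embedding $V \hookrightarrow U$ yields a conormal sequence, giving in $K_0(G,V)$ the relation $\iota^*{\mathbb T} = {\mathbb T}_\iy + N_\iota$.

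Next I would match up the components of the double inertia. Because $\iota$ is stabilizer preserving, the decomposition $I^2_{G}U = \coprod_\Phi I_\Phi$ over double conjugacy classes is compatible with that of $I^2_{G}V$, and on the component indexed by $(g_1,g_2)\in\Phi$ the map $I^2\iota$ is the closed immersion $[V^{g_1,g_2}/Z_G(g_1,g_2)] \hookrightarrow [U^{g_1,g_2}/Z_G(g_1,g_2)]$ induced by $V^{g_1,g_2} = V \cap U^{g_1,g_2} \subseteq U^{g_1,g_2}$. Restricting the relation $\iota^*{\mathbb T} = {\mathbb T}_\iy + N_\iota$ along the equivariant closed immersion $V^{g_1,g_2}\hookrightarrow V$ gives, in $K_0(Z_G(g_1,g_2),V^{g_1,g_2})$,
$${\mathbb T}|_{V^{g_1,g_2}} = {\mathbb T}_\iy|_{V^{g_1,g_2}} + N_\iota|_{V^{g_1,g_2}},$$
and since $V^{g_1,g_2}\subseteq U^{g_1,g_2}$ this is precisely the restriction to $V^{g_1,g_2}$ of the class ${\mathbb T}|_{U^{g_1,g_2}}$ appearing in (\ref{eq.logtrace}) for $\ix$.

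Finally I would carry out the cancellation. Pulling ${\mathscr R_X}$ back along $I^2\iota$ on the $\Phi$-component amounts to applying (\ref{eq.logtrace}) to ${\mathbb T}|_{V^{g_1,g_2}}$; substituting the relation above and using additivity of each term, the total contribution of $N_\iota$ to the five summands is
$$L(g_1)(N_\iota) + L(g_2)(N_\iota) + L((g_1g_2)^{-1})(N_\iota) - N_\iota + (N_\iota)^{g_1,g_2}.$$
By Theorem \ref{thm.strongembedd}(iii) the elements $g_1$, $g_2$, and hence $(g_1g_2)^{-1}$ act trivially on $N_\iota$, so by the Remark on $L(g)$ the first three terms vanish, while $(N_\iota)^{g_1,g_2} = N_\iota$ makes the last two cancel. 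Thus every $N_\iota$-contribution drops out and $I^2\iota^*{\mathscr R_X} = {\mathscr R_Y}$ on each component. I expect the main obstacle to be the bookkeeping in the component-matching step: verifying that strong regularity forces $I^2\iota$ to restrict to the stated closed immersions of double-fixed loci, and that the formula (\ref{eq.logtrace}) for $\ix$ genuinely pulls back to the same formula applied to ${\mathbb T}|_{V^{g_1,g_2}}$. Once that compatibility is pinned down the remaining cancellation is purely formal.
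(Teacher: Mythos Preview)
Your proposal is correct and follows essentially the same route as the paper: decompose ${\mathbb T}_\ix|_{Y^{g_1,g_2}} = {\mathbb T}_\iy|_{Y^{g_1,g_2}} + N_\iota|_{Y^{g_1,g_2}}$, then use that the inertia acts trivially on $N_\iota$ to kill the $L(g)$-terms and cancel $-N_\iota$ against $(N_\iota)^{g_1,g_2}$. The bookkeeping step you flag as the main obstacle---that $I^2\iota$ restricts componentwise to closed immersions $V^{g_1,g_2}\hookrightarrow U^{g_1,g_2}$ with the expected normal bundle---is precisely what the paper isolates as Proposition~\ref{prop.normalpullsback}, which it proves separately and then invokes in a one-line argument.
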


A necessary ingredient in the Proof of Theorem \ref{thm.obstructionpullback}
is the following property of strong regular embeddings.
\begin{proposition} \label{prop.normalpullsback}
Let $\iota \colon \iy \to \ix$ be a strong regular embedding 
of smooth Deligne-Mumford stacks
and let $I\iy$ and $I\ix$  be their respective inertia stacks.
Then the normal bundle of $I\iy$ in $I\ix$ is the pullback 
of the normal bundle of $\iy$ in $\ix$.
Likewise 
the normal bundle to $I^2\iy$ in $I^2\ix$ is the pullback
of the normal bundle of $\iy$ in $\ix$.
\end{proposition}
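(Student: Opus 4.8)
The plan is to reduce everything to the explicit local model provided by Theorem~\ref{thm.strongembedd} and then make a tangent-space rank count at each point of the fixed locus. Since the formation of normal bundles and of pullbacks both commute with \'etale base change, and since strongly \'etale morphisms are stabilizer preserving so that $I[U/G] = [U/G]\times_\ix I\ix$, it suffices to verify the assertion after pulling back along a strongly \'etale neighborhood of an arbitrary geometric point $y \to \iy$. Invoking condition (v) of Theorem~\ref{thm.strongembedd}, I would fix such a neighborhood $[U/G]\to\ix$, with $G=\Aut(y)$ linearly reductive (the stacks being tame), $U=\Spec A$ smooth and affine, and $\iy\times_\ix[U/G]=[V/G]$ where $V=\Spec A/I$ with $I=(f_1,\ldots,f_r)$ generated by a $G$-fixed regular sequence. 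In this model the conormal sheaf $I/I^2$ is $G$-equivariantly free of rank $r$ with basis $\overline f_1,\ldots,\overline f_r$ on which $G$ acts trivially, and $N_\iota$ is its dual.

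Writing $I\ix=[I_G U/G]$ and $I\iy=[I_G V/G]$ and decomposing over conjugacy classes, the relevant components over the class of $g\in G$ are $[U^g/Z_G(g)]$ and $[V^g/Z_G(g)]$, and the projection $I\iy\to\iy$ restricts to the map induced by $V^g\hookrightarrow V$. Thus the claim becomes: $V^g\hookrightarrow U^g$ is a regular embedding whose conormal sheaf is, $Z_G(g)$-equivariantly, the restriction of $I/I^2$. Since $V^g=V\cap U^g$, this subscheme is cut out in $U^g$ by the restrictions $f_1|_{U^g},\ldots,f_r|_{U^g}$, which is meaningful because the $G$-fixed $f_i$ are in particular $g$-invariant, and these restrictions are $Z_G(g)$-fixed.

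The heart of the matter is a rank count at each point $P\in V^g$, where tameness and condition (iii) enter. Because the action is tame and $U,V$ are smooth, the fixed loci $U^g,V^g$ are smooth with $T_P(U^g)=(T_P U)^g$ and $T_P(V^g)=(T_P V)^g$, so $V^g\hookrightarrow U^g$ is automatically a regular embedding. Applying the exact functor of $g$-invariants to the short exact sequence of $g$-representations
\[
0 \to T_P V \to T_P U \to N_{\iota,P} \to 0,
\]
and using that $P\in V^g$ forces $g\in\Stab(P)$, so that condition (iii) makes $N_{\iota,P}$ a trivial $g$-module, gives
\[
T_P(U^g)/T_P(V^g) \;\cong\; (T_P U)^g/(T_P V)^g \;\cong\; N_{\iota,P},
\]
which is $r$-dimensional. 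Hence $V^g$ has codimension exactly $r$ in $U^g$ at every point, so its conormal sheaf $J/J^2$ (with $J$ the ideal of $V^g$ in $U^g$) is locally free of rank $r$. The natural $Z_G(g)$-equivariant comparison map $(I/I^2)|_{V^g}\to J/J^2$ sending $\overline f_i$ to the class of $f_i|_{U^g}$ is surjective by construction, and a surjection between locally free sheaves of the same rank $r$ is an isomorphism; dualizing yields $N_{V^g/U^g}\cong N_\iota|_{V^g}$, which is the asserted pullback formula on the component indexed by $g$.

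The double-inertia statement follows by the identical argument applied to the subgroup $H=\langle g_1,g_2\rangle$: over a double conjugacy class the relevant component of $I^2\ix$ is $[U^{g_1,g_2}/Z_G(g_1,g_2)]$ with $U^{g_1,g_2}=U^H$, the $f_i$ are $H$-fixed, and for $P\in V^{g_1,g_2}$ the inclusion $H\subseteq\Stab(P)$ makes $N_{\iota,P}$ a trivial $H$-module, so taking $H$-invariants of the same tangent sequence identifies the conormal of $V^{g_1,g_2}$ in $U^{g_1,g_2}$ with the restriction of $I/I^2$. I expect the principal obstacle to be precisely this rank count: one must check that the normal representation is trivial not merely at the distinguished fixed point but at \emph{every} point $P$ of the fixed locus, and that passing to invariants is exact --- both of which rest on tameness and on the full strength of condition (iii), and it is exactly the triviality of the normal representation that prevents the codimension of the fixed locus from dropping.
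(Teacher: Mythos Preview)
Your proof is correct and takes essentially the same approach as the paper's: both reduce via a strongly \'etale neighborhood to the local model $[V/G]\hookrightarrow[U/G]$ with $V$ cut out by a $G$-fixed regular sequence, and then verify that the codimension of $V^g$ in $U^g$ agrees with that of $V$ in $U$ using the triviality of the normal representation at every point of the fixed locus. The only cosmetic difference is that the paper checks this codimension via the formal splitting $\widehat{\mathcal O}_{y,U}\cong\widehat{\mathcal O}_{y,V}[[T_1,\ldots,T_d]]$ (with trivial $G_y$-action on the $T_i$), whereas you argue directly on tangent spaces by applying $g$-invariants to the normal sequence $0\to T_PV\to T_PU\to N_{\iota,P}\to 0$.
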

\begin{example}
The hypothesis that $\iy$ is smooth is crucial. Let
$\ix = [\A^2/\mu_2]$ where $\mu_2$ acts by $-1\cdot(a,b) = (-a,-b)$.
Let $V= V(xy)$  and let $\iv = [V/\mu_2]$. Since $V$ is defined by a
$\mu_2$-invariant function, the embedding $\iv \to \ix$ is a strong regular
embedding of pure codimension one. The inertia $I\ix$ is the quotient
disjoint
sum $I\ix \coprod B\mu_2$ and the inertia $I\iy$ is the disjoint sum
of $I\iy \coprod B\mu_2$. Thus the embedding of $I\iy$ in $I\ix$ does not
have pure codimension one so the normal bundle if $I\iy$ in $I\ix$ cannot be the pullback of the normal bundle of $\iy$ in $\ix$.
\end{example}

\begin{proof}[Proof of Proposition \ref{prop.normalpullsback}]
Since $\iy \to \ix$ is a closed embedding the diagram
$$\xymatrix{I\iy \ar[d]\ar[r]^{I\iota} & I\ix \ar[d]\\
\iy \ar[r]^{\iota} & \ix}$$
is cartesian. Moreover, since $\iy$ and $\ix$ are smooth,
$I \iy$ and $I \ix$ are also smooth, so $I\iota$ is also a regular embedding.
To check that the normal bundle of $I\iy$ in $I\ix$ is the pullback
of the normal bundle of $\iy$ and $\ix$ it suffices to prove
that the two embeddings have the same codimension.

This can be checked after base change by strongly \'etale morphism.
Thus we can assume that $\ix = U/G$ where $U$ is
affine and $G$ is a finite group and $\iy = [V/G]$ where $V$ is a
closed subscheme cut out by a $G$-fixed regular sequence.  Since $\ix
= [U/G]$ the inertia stack $I \ix$ is the quotient stack $[I_GU/G]$
where $I_GX$ is the inertia group for the action of $G$. Since $G$ is
finite $I_GU = \coprod_{g \in G}U^g$ where $U^g$ is the subscheme
fixed by $g$. Likewise, $I\iy = [I_G V/G]$ and $I_GV = \coprod_{g \in
  G} V^g$.

Thus it suffices to check that $V \hookrightarrow U$ is a
$G$-equivariant embedding of affine schemes of pure codimension $d$
and $V$ is defined by a $G$-fixed regular sequence. Then for any $g \in
G$ the codimension of $V^g$ in $U^g$ is also $d$.  Let $y \in V^g$ be
a point and let $G_y$ be the stabilizer of $y$ which necessarily
contains $g$.  The argument used in the proof of Theorem \ref{thm.strongembedd} shows
that the complete local ring $\widehat{\mathcal O}_{y,U}$ is
isomorphic as a $G_y$-module to $\widehat{\mathcal O}_{y,V}[[T_1,
\ldots ,T_d]]$ where $G_y$ acts trivially on the $T_i$. Hence
$$\widehat{\mathcal O}_{y,U^g} = \widehat{\mathcal O}_{y,U}^{\langle g \rangle} =
{\mathcal O}_{y,V}^{\langle g \rangle}[[T_1, \ldots , T_d]]$$
so the codimension is preserved.

A similar argument shows that the normal bundle of $I^2 \iy$
in $I^2 \ix$ is also the pullback of the normal bundle
to $I\iy$ in $I\ix$.
\end{proof}

\begin{proof}[Proof of Theorem \ref{thm.obstructionpullback}]
   If $\iy \hookrightarrow \ix$ is a strong regular embedding then by
  Proposition \ref{prop.normalpullsback} we have the following
  identity in $K_0(Z_G(g_1,g_2),(Y^{g_1,g_2}))$: 
$${\mathbb
    T\ix}|_{Y^{g_1,g_2}} = {\mathbb T\iy}|_{Y^{g_1,g_2}} +
  N|_{Y^{g_1,g_2}}$$ 
where $N$ is the normal bundle of $\iy$ in $\ix$.
Moreover, since $\iota$ is a strong regular embedding the action of the fibers
  of $N|_{Y^{g_1,g_2}}$ are trivial modules for the action of the
  group generated by $g_1,g_2$. Hence $L(g_1)(N) = L(g_2)(N) =
  L((g_1g_2)^{-1})(N) = 0$. Also, since $g_1, g_2$ act trivially on
  $N$, $N= N^{g_1,g_2}$.  Substituting into formula
  \eqref{eq.logtrace} we see that $LT(\mathbb T_\ix)|_{Y^{g_1,g_2}} =
  LT(\mathbb T_\iy)$ which proves the proposition.
\end{proof}


\begin{proof}[Proof of Theorem \ref{thm.orbifoldproducts}]
To prove the theorem we must show the following identity holds
for any $\alpha, \beta \in \CH^*(I\ix)$.
\begin{equation} \label{eq.crucial} \mu_* \left(e_1^*(I\iota)^*(\alpha) \cdot
e_2^*(I\iota)^*(\beta) \cdot \eu({\mathscr R}_\iy) \right)
= (I^2\iota)^*\left(\mu_*( e_1^*\alpha \cdot e_2^*\beta \cdot \eu({\mathscr R}_\ix)\right)
\end{equation}
By Theorem \ref{thm.obstructionpullback} ${\mathscr R_\iy} =
(I^2\iota)^*{\mathscr R_\ix}$.
By Proposition \ref{prop.normalpullsback} the normal bundle
of $I^2\iy$ in $I^2\ix$ is the pullback of the normal bundle
to $I\iy$ in $I\ix$. It follows \cite[Theorem 6.2(b), (c)]{Ful:84} that
$e_i^* \circ (I\iota)^* = (I^2\iota) \circ e_i^*$ as morphisms
$\CH^*(I\ix) \to \CH^*(I^2\iy)$. Applying \cite[Theorem 6.2(a), (b)]{Ful:84}
also implies that
$\mu_* \circ (I^2\iota)^* = (I\iota)^* \circ \mu_*$
as morphism $\CH^*(I^2\ix) \to \CH^*(I\iy)$.

Substituting these identities into Equation \eqref{eq.crucial} yields
the theorem.
\end{proof}

\subsection{Examples}
Let $\ix = [\A^3/\mu_3]$ where the generator $\omega$ of $\mu_3$ acts
by $\omega(a,b,c) = (a, \omega b, \omega^2 c)$.  Since $\A^3$ is a
representation of $\mu_3$, $\CH^*(\ix) = \CH^*(B\mu_3) = \ZZ[t]/3t$.
The inertia $I \ix$ has three components indexed by $\mu_3$.  The
identity section $I_1$ is isomorphic $\ix$ and the components
$I_\omega$ and $I_{\omega^2}$ which are both isomorphic to $\A^1
\times B\mu_3$ which is identified with the quotient
$[\{(a,0,0)\}/B\mu_3]$.  The Chow groups $\CH^*(I\ix)$ have a natural
$\CH^*(B\mu_3)$-module structure which is preserved by the orbifold
product so we write $\CH^*(I \ix) =\oplus_{m \in \mu_3} \ZZ[t]/(3t)$.

Since $\mu_3$ is abelian there are 9 double conjugacy classes and
$I^2\ix$ has 9 components indexed by $\mu_3 \times \mu_3$.  For each
pair $(m_1, m_2) \in \mu_3 \times \mu_3$ let ${\mathscr R}(m_1, m_2)$
be the restriction of the obstruction bundle to the component
$I^2_{m_1,m_2} = [(\A^3)^{m_1,m_2}/\mu_3]$.  Let $\xi$ be the defining
representation of $\mu_3$.  Then the tangent bundle ${\mathbb T}$ of
$\ix$ corresponds to the representation $1 + \chi + \chi^2$ where $1$
denotes the trivial representation. Using the formula of \cite{EJK:10}
we obtain ${\mathscr R}(\omega, \omega) = \chi^2$ ${\mathscr
  R}(\omega^2, \omega^2) = \chi$ and ${\mathscr R}(m_1,m_2) = 0$ for
all other pairs $(m_1, m_2) \ \in \mu_3 \times \mu_3$.

If we denote by $l_m$ with $m \in \mu_3$ the generator of the
$\ZZ[t]/(3t)$-module $\CH^*(I_m)$, then we obtain the following
identities for the orbifold product.
\begin{eqnarray*}
l_\omega \star l_\omega  = 2t l_\omega\\
l_\omega \star l_{\omega^2} = 2t^2 l_1\\
l_{\omega^2} \star l_{\omega^2} = t l_\omega
\end{eqnarray*}
and $l_1$ acts as the identity.
Hence $$\CH^*_{orb}(I\ix) = \ZZ[t, l_\omega, l_{\omega^2}]/(3t, 
l^2_{\omega} - 2t l_\omega,
l_\omega l_{\omega^2} - 2t^2, l^2_{\omega^2} - t l_{\omega^2}).$$

We will now consider various substacks of $\ix$ and compare orbifold
Chow rings. To start let $\iy = [\A^2/\mu_3]$ where $\omega \cdot
(b,c) =(\omega b, \omega c)$ The map $\A^2 \to \A^3$, $(b,c) \mapsto
(0,b,c)$ induces a strong regular embedding of stacks $\iy \to \ix$
since the ideal of $\A^2$ in $\A^3$ is generated by the $\mu_3$-fixed
function $x$.  Again, $\CH^*(I\iy)$ is isomorphic to the $\oplus_{m \in
  \mu_3} \ZZ[t]/(3t)$ and the pullback induced by the inclusion $I\iy
  \to I\ix$ maps $l_\omega$ to $l_\omega$ and $l_{\omega^2}$ to
  $l_{\omega^2}$ and direct calculation shows that the pullback
  induces an isomorphism $\CH^*_{orb}(\iy)\to
\CH^*_{orb}(\ix)$.

Now let $\iy$ be the substack $[\A^2/\mu_3]$ where $\omega \cdot (a,b) = 
(a, \omega b)$. The map $\A^2 \to \A^3$, $(a,b) \mapsto (a,b,0)$
induces an embedding $\iy \to \ix$ but since the defining equation
of $\A^2$ in $\A^3$ is not $\mu_3$-invariant this is not a strong regular embedding. 
Indeed a direct calculation shows that 
${\mathscr R}(\omega, \omega) = 0$ so
that ${\mathscr R}_\iy$ is not the pullback of ${\mathscr R}_\ix$.
Again, $\CH^*(I\iy)$ is isomorphic to $\oplus_{m \in \mu_3} \ZZ[t]/(3t)$
and the pullback induced by the inclusion $\iy \to \ix$ maps $l_m$ to $l_m$.
However, because ${\mathscr R}_{\omega, \omega} = 0$
the orbifold Chow ring has the following presentation
$$\CH^*_{orb}(I\iy) = \ZZ[t,l_1, l_2]/(3t, l_{\omega^2} - l_\omega, l_\omega l_{\omega^2} - 2 t^2, l_{\omega^2} l_{\omega^2} - t l_{\omega^2})$$
and we see that the map that sends the generator $l_m$ of $\CH^*(I\ix_m)$ to the
generator $l_m$ of $\CH^*(I\iy_m)$ is not a homomorphism of orbifold Chow
rings.

Finally let $\iy = [Z/\mu_3]$ where $Z = V(yz-1) \subset \A^3$.  Since
the equation $yz -1$ is $\mu_3$-fixed the map $\iy \to \ix$ is a
strong regular embedding.  Since $\mu_3$ acts freely on $Z$ on the
quotient stack $\iy$ is represented by the scheme $\A^1 \times \G_m$.
Thus $I\iy = \iy$ and so $(I\iy)_\omega$ and $(I \iy)_{\omega^2}$ are
both empty.  Thus $\CH^*(I\iy) = \CH^*(\A^1 \times \G_m) = \ZZ$ and the
orbifold product is trivial.  The pullback induced by the inclusion
$\iy \to \ix$ maps $l_\omega$ and $l_{\omega^2}$ to 0 and the map on
orbifold Chow rings is the homomorphism
$$\ZZ[t,l_\omega, l_\omega^2]/(3t, l^2_{\omega} - 2t l_\omega, l_\omega l_{\omega^2} - 2t^2, l_{\omega^2}^2 - t l_{\omega^2}) \to \ZZ$$
given by setting $t, l_\omega, l_{\omega^2}$ to be equal to 0. 

\section{Application: The generalized Euler sequence and cotangent bundles stacks} \label{sec.cotangent}
Let $T$ be a torus acting properly on a smooth variety
$X$ defined over a field $k$. Let $\ix = [X/T]$ be the corresponding
quotient stack. 


In this section we consider two natural quotient stacks that arise
from this data, namely $T^*\ix$ and $[T^*X/T]$. The first stack is
intrinsic to $\ix$ while the latter depends on the presentation of
$\ix$ as a quotient stack. There is an exact sequence
of vector bundles on $\ix$
\begin{equation}
\label{eq.eulerseq} 0 \to T^*\ix \to [T^*X/T] \to [X \times \Lie(T)^*/T] \to 0.
\end{equation}
Since $T$ is a torus $\Lie(T)$ is a trivial $T$-module
and the last bundle in the sequence is the trivial bundle $[X/T] \times Lie(T)$.
We call \eqref{eq.eulerseq}  the {\em generalized Euler sequence} because, when
$X = \A^{n+1} \smallsetminus \{0\}$ and $T = \G_m$ acts with weights
all equal to one then $\ix = [X/T] = \Pro^n$ and \eqref{eq.eulerseq}
is the usual Euler sequence for the cotangent bundle of $\Pro^n$.

Our goal is to understand the relationship between the stacks $T^*\ix$ and
$[T^*X/T]$. 
Since both stacks are vector bundles over $\ix$ the
inclusion $i \colon T^*\ix \to [T^*X/T]$ induces  pullback
isomorphism
$i^*\colon \CH^*([T^*X/T] \to \CH^*(\ix)$.

However, stronger results hold.
Let ${\mathbf M}$ be the coarse space of $T^*\ix$ and let
${\mathbf N}$ be the coarse space of $[T^*X/T]$. 
By Sumihiro's theorem \cite{Sum:74}, the action of a torus on a normal variety is locally linearizable. This implies that the coarse spaces ${\mathbf M}$, 
${\mathbf N}$
and ${\mathbf X}$ are all $k$-varieties.

There is
commutative triangle of morphisms of schemes.
$$\xymatrix{
{\mathbf M} \ar[rr]^i \ar[rd] & & {\mathbf N} \ar[ld] \\
& \bx &
}
$$
Note that in general ${\mathbf M}$ and ${\mathbf N}$
are not vector bundles over $\bx$.
\begin{proposition}  The inclusion $\iota
  \colon T^*\ix \to [T^*X/T]$ is a strong regular embedding, so
the inclusion ${\mathbf M} \to {\mathbf N}$ is a regular embedding which is \'etale locally isomorphic to a section of a smooth morphism.
\end{proposition}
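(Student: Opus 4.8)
The plan is to verify one of the pointwise conditions of Theorem~\ref{thm.strongembedd} for the inclusion $\iota \colon T^*\ix \to [T^*X/T]$ and then feed the conclusion into Theorem~\ref{thm.localsection}. The first step is to identify the normal bundle. Both $T^*\ix$ and $[T^*X/T]$ are vector bundles over $\ix$, and the generalized Euler sequence~\eqref{eq.eulerseq} realizes $T^*\ix$ as the kernel sub-bundle of the $T$-equivariant surjection $[T^*X/T] \to \ix \times \Lie(T)^*$ induced by the algebraic moment map $\mu$. For an inclusion $E \hookrightarrow F$ of vector bundles over a base $B$, the normal bundle of the total space of $E$ in the total space of $F$ is the pullback along the projection $\pi \colon E \to B$ of the quotient bundle $F/E$; applying this with $B = \ix$ gives $N_\iota = \pi^*\bigl(\ix \times \Lie(T)^*\bigr)$.

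The crucial point is that this quotient bundle is trivial. Since $T$ is a torus, its adjoint action on $\Lie(T)$, and hence the coadjoint action on $\Lie(T)^*$, is trivial, so $[X \times \Lie(T)^*/T] = \ix \times \Lie(T)^*$ is the trivial bundle of rank $d = \dim T$. Consequently $N_\iota$ is globally trivial on $T^*\ix$. Condition (ii) of Theorem~\ref{thm.strongembedd} then holds with the single stratum being all of $T^*\ix$; equivalently, for every geometric point the fiber of $N_\iota$ is a trivial representation of the inertia group, since that group is a finite subgroup of $T$ and $T$ acts trivially on $\Lie(T)^*$, which is condition (iii). (Tameness, required to apply the theorem, is automatic here: every finite stabilizer is a diagonalizable, hence linearly reductive, subgroup scheme of $T$.) Thus $\iota$ is a strong regular embedding, and in particular the square relating $T^*\ix$ and $[T^*X/T]$ to their coarse spaces $\bm$, $\bn$ is cartesian and $\bm \to \bn$ is a regular embedding.

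To conclude that $\bm \to \bn$ is \'etale locally a section of a smooth morphism I would invoke Theorem~\ref{thm.localsection}, which requires both stacks to be smooth. Smoothness of $[T^*X/T]$ is clear from smoothness of the cotangent variety $T^*X$; smoothness of $T^*\ix = [\mu^{-1}(0)/T]$ follows because properness of the $T$-action makes the infinitesimal action $\Lie(T) \to T_xX$ injective at every $x$, so the fiberwise moment map $T_x^*X \to \Lie(T)^*$ is surjective, $\mu$ is a submersion, and $\mu^{-1}(0)$ is smooth of codimension $d$. With smoothness in place the proposition follows directly from Theorems~\ref{thm.strongembedd} and~\ref{thm.localsection}.

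Since the argument is essentially a short application of the general machinery once the normal bundle is understood, there is no serious obstacle; the only genuine content, and the step I would be most careful with, is the identification of $N_\iota$ with the pulled-back quotient term of the Euler sequence together with the observation that this term is trivial precisely because the coadjoint representation of a torus is trivial. The bookkeeping around smoothness of $T^*\ix$ and automatic tameness is routine but must be recorded so that Theorem~\ref{thm.localsection} genuinely applies.
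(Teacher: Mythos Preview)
Your proof is correct and follows essentially the same route as the paper: identify the normal bundle of $\iota$ with the pullback of the last term of the Euler sequence~\eqref{eq.eulerseq}, observe that this term is the trivial bundle $\ix \times \Lie(T)^*$ because the (co)adjoint action of a torus is trivial, and then invoke Theorem~\ref{thm.strongembedd}. The paper's proof is just these three lines; you add the explicit appeal to Theorem~\ref{thm.localsection} for the second clause and record tameness and smoothness, which the paper leaves implicit. One small simplification: your verification that $T^*\ix$ is smooth via submersivity of $\mu$ is unnecessary, since $T^*\ix$ is by definition the cotangent bundle of the smooth stack $\ix$ and hence a vector bundle over a smooth base.
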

\begin{proof}
  Observe that the sequence $ T^*\ix \to [T^*X/T] \to [X \times
  \Lie(T)^*/T]$ is exact and $\Lie(T)^*$ is a trivial
  $T$-module (because $T$ is diagonalizable). Hence the normal bundle to
  $\iota$ is the trivial bundle $T^*\ix \times \Lie(T)^*$
so $\iota$ is a strong regular embedding.
\end{proof}

\begin{corollary}
If $\tilde{N}$ is a canonical (functorial) resolution of singularities of $N$
then $\tilde{M} := M \times_N \tilde{N}$ is a resolution of singularities as
well.
\end{corollary}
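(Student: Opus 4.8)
The plan is to deduce this immediately from the preceding Proposition together with Proposition \ref{prop.resolution}; in fact the statement is nothing more than the specialization of Corollary \ref{cor.resolution} to the inclusion arising from the generalized Euler sequence. First I would record that both stacks in question are smooth and Deligne-Mumford. Since $X$ is smooth and $T$ acts properly on $X$, the action has finite stabilizers, so $\ix = [X/T]$ is a smooth Deligne-Mumford stack; the same is true for its cotangent stack $T^*\ix$, which is a vector bundle over $\ix$. The induced $T$-action on the smooth variety $T^*X$ again has finite stabilizers (the stabilizer of $(x,\xi)$ is contained in $\Stab_x X$), so $[T^*X/T]$ is likewise a smooth Deligne-Mumford stack, and $M$ and $N$ are by definition the coarse spaces of $T^*\ix$ and $[T^*X/T]$.

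With smoothness in hand, the preceding Proposition already supplies what is needed: the inclusion $\iota\colon T^*\ix \to [T^*X/T]$ is a strong regular embedding of smooth Deligne-Mumford stacks, and the induced morphism of coarse spaces $i\colon M \to N$ is \'etale locally a section of a smooth morphism. Next I would observe that this is exactly the defining condition of a \emph{tubular} regular embedding: for each point of $M$ there are \'etale neighborhoods $Z \to M$ and $W \to N$ such that $Z \to N$ factors as $Z \to Z' \to W \to N$ with $Z' \to W$ a section of a smooth morphism and $Z \to Z'$ \'etale. Thus $i\colon M \to N$ is a tubular regular embedding in the sense of the definition preceding Proposition \ref{prop.resolution}, the verification being a direct transcription of the conclusion of Theorem \ref{thm.localsection}.

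Finally, working in characteristic zero so that a resolution-of-singularities functor $\BR$ that is functorial for smooth morphisms is available, I would apply Proposition \ref{prop.resolution} to the tubular regular embedding $i\colon M \to N$ with $\tilde N = \BR(N)$. The proposition then gives that the fiber product $i^*\tilde N = M \times_N \tilde N = \tilde M$ is a resolution of singularities of $M$, which is precisely the assertion. There is no genuine obstacle in this argument: all of the real content has been carried out in Theorem \ref{thm.localsection} (identifying the coarse-space morphism as tubular) and in Proposition \ref{prop.resolution} (descending resolutions along tubular regular embeddings), and the only things one must pause to check—smoothness of the two stacks and the matching of the ``\'etale locally a section of a smooth morphism'' conclusion with the tubular hypothesis—are immediate. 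Accordingly I would expect the proof to occupy only a few lines, essentially a citation of the Proposition above and of Proposition \ref{prop.resolution}.
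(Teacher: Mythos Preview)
Your proposal is correct and matches the paper's approach: the corollary is stated without proof in the paper precisely because it is an immediate specialization of Corollary \ref{cor.resolution} to the strong regular embedding $\iota\colon T^*\ix \hookrightarrow [T^*X/T]$ furnished by the preceding Proposition. Your unpacking through Theorem \ref{thm.localsection} and Proposition \ref{prop.resolution} is just the content of the proof of Corollary \ref{cor.resolution} itself, so there is no difference in substance.
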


The main result  of this section is the following result about
the Chow groups of ${\mathbf M}$ and ${\mathbf N}$.
\begin{theorem} \label{thm.cotangentpackage}
If ${\mathbf N}$ is a scheme then the pullback $\iota^* \colon \CH_*(\bn) \to \CH_{* - \dim T}(\bm)$ is an
isomorphism of integral Chow groups. 
\end{theorem}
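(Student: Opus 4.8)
The plan is to prove that $\iota^* \colon \CH_*(\bn) \to \CH_{*-\dim T}(\bm)$ is an isomorphism by exploiting the torus action and reducing, via the localization sequence \eqref{eq.localization} for higher Chow groups, to the case where the action is free. The key structural fact is that $\bm$ and $\bn$ are the coarse spaces of $T^*\ix$ and $[T^*X/T]$, which sit over $\bx = X/T$, and that $\iota$ is a strong regular embedding whose normal bundle is the \emph{trivial} bundle $T^*\ix \times \Lie(T)^*$. The triviality of the normal bundle is exactly what should force the pullback to be an isomorphism rather than merely injective.

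First I would stratify $X$ by the dimension (or more precisely the connected components of the isotropy lattice) of the stabilizers of the torus action. Let $X^{\free} \subset X$ be the open locus where $T$ acts with trivial (finite) stabilizer, and let $Z$ be its complement. Over $X^{\free}$, the quotient stack is representable, so the coarse spaces of $T^*[X^{\free}/T]$ and $[T^*X^{\free}/T]$ agree with the stacks themselves; since both are vector bundles over $X^{\free}/T$ with the inclusion a subbundle inclusion having trivial quotient $\Lie(T)^*$, the pullback on Chow groups is the homotopy-invariance isomorphism for vector bundles, shifted by $\dim T$. The plan is then to run the localization sequence \eqref{eq.localization} simultaneously for $\bn$ (stratified by the images of $X^{\free}$ and $Z$) and for $\bm$, obtaining a map of long exact sequences. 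Using higher Chow groups rather than ordinary ones is essential here: it lets me assemble the comparison over open and closed strata into genuine exact sequences and apply a five-lemma argument.

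The main obstacle will be the inductive step over the non-free locus. On the closed stratum the stabilizers are positive-dimensional subtori, so the coarse space is itself a quotient by a smaller torus (a rank reduction), and one must check that the restriction of $\iota$ to this stratum is again the coarse-space realization of a strong regular embedding with trivial normal bundle of the expected relative dimension. Concretely, after passing to a fixed connected component of the isotropy type with stabilizer subtorus $T' \subset T$, the relevant slice is $[T^*X'/T]$ versus $T^*[X'/T]$ where the normal direction $\Lie(T)^*$ splits compatibly with $\Lie(T')^*$; the subtle point is bookkeeping the codimension/dimension shifts so that the induced map on each stratum is again the vector-bundle pullback isomorphism (shifted by $\dim T$), which it must be by Proposition~4.2 applied strata-wise. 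I would verify this shift is uniform across strata so that the five-lemma applies degree by degree.

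Granting these pieces, the argument concludes by Noetherian induction on the number of isotropy strata: the free stratum gives the base case (vector-bundle homotopy invariance), and the map of localization sequences, with isomorphisms on the open part and on each closed stratum, forces $\iota^*$ to be an isomorphism by the five-lemma. I expect the delicate part to be purely the combinatorics of how $\Lie(T)^*$ and the stabilizer subtori interact along the stratification --- ensuring the normal bundle remains trivial and the dimension shift stays exactly $\dim T$ on every stratum --- rather than any single hard geometric input, since the strong-embedding property (triviality of $N_\iota$) has already been established in Proposition~4.2 and is inherited by the strata.
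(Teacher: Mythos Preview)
Your overall architecture---stratify, use the localization long exact sequence for higher Chow groups, handle the open stratum directly, and finish by Noetherian induction and the five-lemma---is exactly the shape of the paper's argument. The gap is in your description of the strata. By hypothesis $T$ acts properly on $X$, so $\ix=[X/T]$ is Deligne-Mumford and \emph{every} stabilizer is a finite subgroup of $T$; there are no positive-dimensional subtori $T'$ appearing as isotropy, no ``rank reduction'' to a smaller torus, and no splitting of $\Lie(T)^*$ against $\Lie(T')^*$ (the latter is zero). Your inductive step, as written, is addressing a phenomenon that does not occur here.

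What actually happens over the non-free locus is that the generic stabilizer is a nontrivial finite diagonalizable group $H$, and the coarse space of a vector bundle on a gerbe is \emph{not} a vector bundle over the coarse base, so one cannot simply invoke homotopy invariance. The paper handles this by first proving the statement for $\ix=BH$ directly: since $H$ is linearly reductive the sequence $V'\to V\to V''$ of $H$-modules splits, and because $V''$ is a trivial $H$-module one gets $V/H \cong (V'/H)\times V''$ with the inclusion the zero-section of an honest vector bundle, so homotopy invariance applies to the coarse spaces. It then uses Thomason's generic slice theorem to find a dense open substack $\iu\subset\ix$ of the form $BH\times W$ with $W$ affine, on which (after shrinking) the bundles trivialize to $V\otimes\mathcal{O}_W$, reducing to the $BH$ case. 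Noetherian induction on the complement and the localization sequence then finish the proof exactly as you outlined. If you replace your ``positive-dimensional subtori'' stratification with this Thomason-type open piece plus induction on its complement, your proposal becomes the paper's proof.
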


Theorem \ref{thm.cotangentpackage} follows from a more general 
result about strong regular embeddings of vector bundles
on not-necessarily smooth quotient stacks.

Let $\ix = [X/T]$ be a reduced, tame quotient stack with finite stabilizer.
Let
$$\xymatrix{0 \ar[r] & \ar[dr] \ar[r] \iv' & \ar[d] \ar[r] \iv & \ar[dl] \iv'' \ar[r] & 0\\
& & \ix & & & 
}$$
be a short exact sequence of vector bundles on $\ix$ such
that there is a stratification of $\ix$ on which $\iv''$ is trivial
so the inclusion $\iv' \to \iv$ is a strong regular embedding by Theorem
\ref{thm.strongembedd}.
Denote by $\bV$ and $\bV'$ the coarse moduli spaces of $\iv$ and $\iv'$
respectively and let $i \colon \bV' \to \bV$ be the inclusion.
\begin{theorem} \label{thm.quotientvbs}
If $\bV$ is a scheme
the pullback on (higher) Chow groups 
$i^* \colon \CH_*(\bV,i) \to \CH_{*-d}(\bV',i)$ is an isomorphism with integer coefficients. 
\end{theorem}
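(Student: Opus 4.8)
The plan is to prove the statement by Noetherian induction on $X$, using the localization long exact sequence for Bloch's higher Chow groups to reduce to an affine chart on which the defining sequence of bundles actually splits. Throughout I fix the simplicial index and write $\CH_*(-,n)$ for the higher Chow groups; the pullback $i^*$ along the regular embedding $i\colon \bV'\to \bV$ is the refined Gysin map (defined because $i$ is lci) and it lowers dimension by $d=\rk(\iv'')$. The point to keep in mind is that the asserted isomorphism is \emph{integral}: rationally one could identify coarse-space Chow groups with stack Chow groups and invoke homotopy invariance for the vector bundles $\iv',\iv$ over $\ix$, but integrally the coarse-space groups differ from the stack groups by torsion, so I argue directly on the coarse spaces with higher Chow groups.

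First I would record the structural reductions. Restricting the short exact sequence to a $T$-invariant locally closed substack $[W/T]\subset\ix$ preserves the hypotheses: $\iv''|_{[W/T]}$ is still trivial along a stratification, so by Theorem~\ref{thm.strongembedd} the induced map of coarse spaces $\bV'_W\to \bV_W$ is again a regular embedding of codimension $d$, and, since coarse-space formation commutes with flat base change and with locally closed immersions, the square comparing the two coarse embeddings stays cartesian. Consequently, for any $T$-invariant decomposition of $X$ into an open $U$ and its closed complement $Z$, one gets a commutative ladder of localization sequences
\[
\cdots \to \CH_*(\bV_Z,n)\to \CH_*(\bV,n)\to \CH_*(\bV_U,n)\to\cdots
\]
lying over the analogous sequence for $\bV'$, with vertical maps the restricted Gysin pullbacks. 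That the ladder commutes — i.e.\ that $i^*$ commutes with proper pushforward from the closed stratum and with flat restriction to the open stratum — is the compatibility of refined Gysin maps with localization, \cite[Theorem~6.2]{Ful:84}, together with its extension to higher Chow groups in the work of Bloch and Levine. By the five lemma, once the two outer vertical maps are isomorphisms so is the middle one.

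With the ladder in hand I would run the induction. In the inductive step, choose by Sumihiro \cite{Sum:74} a $T$-invariant affine open $U\subset X$ on which $\iv''$ is trivial, and set $Z=X\smallsetminus U$ with its reduced structure. The closed stratum $[Z/T]$ has strictly smaller dimension and inherits all the hypotheses, so the inductive hypothesis makes $i_Z^*$ an isomorphism; the five lemma then reduces everything to the affine open stratum $[U/T]$. This is the base case, where triviality of $\iv''$ is used decisively. Writing $\iv=[E/T]$ and $\iv'=[E'/T]$ with $0\to E'\to E\to \mathcal{O}_U^{\oplus d}\to 0$ a $T$-equivariant sequence (trivial action on $\mathcal{O}_U^{\oplus d}$), the extension class lies in
\[
\Ext^1_{[U/T]}(\mathcal{O},E')=H^1_T(U,E')=H^1(T,\Gamma(U,E')),
\]
which vanishes because $U$ is affine and the torus $T$ is linearly reductive. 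Hence the sequence splits $T$-equivariantly, $E\cong E'\oplus \mathcal{O}_U^{\oplus d}$, so on coarse spaces $\bV_U\cong \bV'_U\times\A^d$ with $i_U$ the zero section. Homotopy invariance of higher Chow groups makes the flat pullback $\CH_{*-d}(\bV'_U,n)\to \CH_*(\bV'_U\times\A^d,n)$ an isomorphism, and since $i_U^*$ is a left inverse to this isomorphism it is itself an isomorphism.

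I expect two points to be the main obstacles in writing this up. The first is the commutativity of the localization ladder for \emph{higher} rather than ordinary Chow groups; this rests on the functoriality of the Gysin and specialization maps in Bloch's and Levine's formulation, and I would isolate it as a preliminary lemma rather than quoting \cite{Ful:84} alone. The second is securing the $T$-invariant affine cover with $\iv''$ trivial: Sumihiro supplies invariant affine opens for a torus acting on a normal variety, and shrinking them trivializes the bundle while remaining invariant and affine, so I would state the normality hypothesis explicitly and note that it holds in all the intended applications, where $X$ is smooth. The conceptually essential step is the base case, where linear reductivity of the torus together with affineness forces the equivariant sequence to split; everything else is bookkeeping around localization and the five lemma.
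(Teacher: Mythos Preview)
Your overall architecture matches the paper's exactly: Noetherian induction on the base, the localization ladder for higher Chow groups plus the five lemma, and a base case where the sequence splits on a well-chosen open so that $\bV_U\cong\bV'_U\times\A^d$. The paper's Lemmas~\ref{lem.gerbecase}--\ref{lemma.quotientlocalization} correspond directly to the ingredients in your paragraphs; in particular your assertion that coarse-space formation respects the open/closed decomposition is exactly Lemma~\ref{lemma.quotientlocalization}.

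The one substantive difference is how the generic open is produced. You invoke Sumihiro to obtain a $T$-invariant affine $U\subset X$ and split the equivariant sequence using $H^1_T(U,E')=0$; the paper instead uses Thomason's generic-slice theorem \cite[Prop.~4.10]{Tho:86d} to find a dense open of the form $BH\times W$ with $H$ finite and $W$ affine, and then splits a sequence of $H$-representations. Your splitting step is arguably cleaner, but Sumihiro forces a normality hypothesis on $X$ that the paper's theorem does not carry, and --- more to the point for the induction --- the closed complement $Z=X\smallsetminus U$ need not be normal even when $X$ is smooth. You flag the normality issue at the end but do not say how the induction survives it: as written, ``$[Z/T]$ inherits all the hypotheses'' is false if normality is among them. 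The repair is cheap: once Sumihiro supplies a $T$-invariant affine cover of the original $X$, intersecting with any $T$-invariant closed $Z\subset X$ yields a $T$-invariant affine cover of $Z$, so you should run the induction over the class of $T$-schemes admitting an invariant affine open rather than over normal $T$-schemes. Alternatively, replace Sumihiro with Thomason's result, which already works for reduced separated schemes and is precisely how the paper avoids the issue; this also lets you drop the extra hypothesis and recover the theorem at its stated generality.
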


\begin{remark}
The assumption that ${\bV}$ is a scheme is required for the proof
because we use the localization theorem for higher Chow groups. If
we knew that this sequence was also valid for algebraic spaces then
the theorem would go through without this hypothesis. 

\end{remark}

We first prove a special case of Theorem \ref{thm.quotientvbs}
\begin{lemma} \label{lem.gerbecase}
The conclusion of Theorem \ref{thm.quotientvbs} holds
for classifying stacks $BH$ where $H$ is finite, linearly reductive group.
\end{lemma}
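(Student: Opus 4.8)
The plan is to show that when $\ix = BH$ the entire situation degenerates to a trivial product, after which the statement is immediate from homotopy invariance of higher Chow groups.

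First I would unwind what the hypotheses say on $BH$. A vector bundle on $BH$ is the same thing as a finite-dimensional $H$-representation, and the short exact sequence $0 \to \iv' \to \iv \to \iv'' \to 0$ is a short exact sequence of $H$-representations. Since $BH$ has a single geometric point whose automorphism group is $H$, the requirement that $\iv''$ be trivial on the strata of a stratification, which is exactly criterion (iii) of Theorem \ref{thm.strongembedd} applied to the normal bundle $N_\iota = \iv''$, forces $\iv''$ to be a trivial $H$-representation, say $\iv'' \cong k^d$. Because $H$ is linearly reductive the sequence splits $H$-equivariantly, so I may write $\iv \cong \iv' \oplus k^d$ as $H$-representations, with $H$ acting trivially on the second summand.

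Next I would pass to coarse spaces. Since $H$ is finite, the coarse space of $[\iv/H]$ is the affine quotient $\bV = \Spec(\Sym \iv^*)^H$, and likewise $\bV' = \Spec(\Sym(\iv')^*)^H$; in particular both are affine schemes, so the hypothesis that $\bV$ be a scheme is automatic in this case. Because $H$ acts trivially on the $k^d$ summand, the invariant ring factors as $(\Sym\iv^*)^H \cong (\Sym(\iv')^*)^H[t_1,\dots,t_d]$, yielding an identification $\bV \cong \bV' \times \A^d$ under which $i \colon \bV' \hookrightarrow \bV$ is the zero section, i.e. a section of the projection $p \colon \bV \to \bV'$.

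Finally I would invoke homotopy invariance. The projection $p$ is a trivial $\A^d$-bundle, hence flat, so for every higher index the flat pullback $p^* \colon \CH_*(\bV', i) \to \CH_{*+d}(\bV, i)$ is an isomorphism. Since $p \circ i = \id_{\bV'}$, functoriality of pullback gives $i^* \circ p^* = \id$, and therefore $i^* = (p^*)^{-1}$ is an isomorphism $\CH_*(\bV, i) \to \CH_{*-d}(\bV', i)$, with exactly the stated degree shift. The only point requiring care is that homotopy invariance be available for the higher Chow groups of the possibly singular affine scheme $\bV'$, which is precisely the form established by Bloch and Levine cited in the localization sequence \eqref{eq.localization}; there is essentially no obstacle in this base case, the genuine difficulty being deferred to the d\'evissage over strata that upgrades this lemma to the general Theorem \ref{thm.quotientvbs}.
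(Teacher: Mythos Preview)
Your proof is correct and follows essentially the same route as the paper: identify vector bundles on $BH$ with $H$-representations, use linear reductivity to split the sequence, observe that triviality of $\iv''$ makes the coarse space factor as $\bV' \times \A^d$ with $i$ the zero section, and conclude by homotopy invariance of higher Chow groups. Your write-up is in fact slightly more detailed than the paper's (e.g., the explicit invariant-ring factorization and the $p\circ i=\id$ argument for inverting $p^*$), but the argument is the same.
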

\begin{proof} A vector
  bundle on $BH$ is a stack $\iv = [V/H]$ where $V$ is a linear
  representation of $H$.  The coarse space of $\iv$ is the quotient
  scheme $V/H = \Spec {\mathcal O}(V)^H$.

By hypothesis we are given a short exact sequence 
$$0 \to V' \to V \to V'' \to 0$$
of $H$-modules
such that $V''$ is trivial.

Since $H$ is linearly reductive any short exact sequence of $H$-modules
splits, so $V = V' \oplus V''$ and the inclusion $V' \to V=V' \oplus V''$
is the 0-section of the projection $V' \oplus V''\to V'$.
Moreover, since $H$ acts trivially on $V''$,
$V/H = V'/H \times V''$ and the inclusions of quotient
$V'/H \to V/H$ is the inclusion of the 0-section of the trivial vector bundle
$V'/H \times V \to V'/H$. Since pullback along the 0-section of a vector bundle
induces isomorphisms of (higher) Chow groups.
\end{proof}

\begin{lemma} \label{lem.thomason}
  If $\ix = [X/T]$ is a reduced Deligne-Mumford quotient stack with $T$ a diagonalizable
group
  then there is a dense open substack $\iu \subset \ix$ such that $\iu$ is
  isomorphic to $BH \times W$ where $H$ is a finite diagonalizable
  group and $W$ is affine.
\end{lemma}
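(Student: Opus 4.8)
The plan is to prove this as a version of Thomason's \emph{generic slice theorem} for diagonalizable group actions. I would establish it by hand in the case where $T$ is a torus (which is all that the applications in this paper require) and indicate where a genuinely diagonalizable $T$ forces an appeal to Thomason's result. First I would reduce to a convenient local model. Since the conclusion only concerns a dense open substack, I may replace $X$ by the normal locus of a single irreducible component (this is a dense $T$-invariant open, the components being preserved by the connected group $T$), and since an algebraic space is generically a scheme I may assume $X$ is a scheme; by Sumihiro's theorem \cite{Sum:74} it then has a $T$-invariant affine open. Thus I reduce to the case $X=\Spec A$ with $A$ an irreducible reduced affine $k$-domain, graded by the character group $M=X^*(T)$ as $A=\bigoplus_{m\in M}A_m$.

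Next I would identify the generic stabilizer. Because $A$ is a graded domain, $\Frac(A)$ is a graded field, so the set $N=\{m\in M : \Frac(A)_m\neq 0\}$ of degrees of nonzero homogeneous rational functions is a subgroup of $M$. Set $H=\Spec k[M/N]$; a point $t\in T$ fixes the generic point exactly when $\chi_m(t)=1$ for all $m\in N$, so $H$ is the generic stabilizer, a diagonalizable subgroup, and it is finite precisely because $\ix$ is Deligne--Mumford, which forces $[M:N]<\infty$. Since $H$ fixes the generic point and the fixed locus $X^H$ is closed, irreducibility and reducedness give $X^H=X$, so $H$ acts trivially on all of $X$ and the action factors through $\overline{T}=T/H$. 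When $T$ is a torus, $\overline{T}$ is again a torus (a torus modulo a finite subgroup) with $X^*(\overline{T})=N$, and upper semicontinuity of the order of the stabilizer yields a dense $T$-invariant open $X_0\subseteq X$ on which $\overline{T}$ acts freely.

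The heart of the argument is trivializing this free action. Choosing a basis $m_1,\dots,m_d$ of the free lattice $N$ and homogeneous rational functions $f_i\in\Frac(A)_{m_i}$, I would shrink $X_0$ to the $T$-invariant dense open where all $f_i$ are regular and invertible; then $\phi=(f_1,\dots,f_d)$ defines a $\overline{T}$-equivariant morphism $X_0\to\Spec k[N]=\overline{T}$. Setting $W=\phi^{-1}(e)$, the multiplication map $\overline{T}\times W\to X_0$ is a $T$-equivariant isomorphism, where $T$ acts by translation through $\overline{T}$ on the first factor and trivially on $W$. Passing to quotient stacks, and using that $H=\ker(T\to\overline{T})$ acts trivially, gives $[X_0/T]\cong[(\overline{T}\times W)/T]\cong[W/H]\cong BH\times W$. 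After shrinking $W$ to an affine open and taking $\iu=[\overline{T}\cdot W/T]$, this produces the desired dense open substack of $\ix$ of the asserted form.

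The main obstacle is exactly this final trivialization for a general diagonalizable $T$. When $T$ is a torus the splitting is Hilbert 90 in disguise: $N$ is free, and the homogeneous rational functions $f_i$ furnish the coordinates realizing $X_0\cong\overline{T}\times W$. For a genuinely diagonalizable $T$ the lattice $N$ may have torsion, $\overline{T}$ need not be a torus, and the residual free action (for instance a free $\mu_\ell$-action) need not be a Zariski-locally trivial torsor, so no equivariant map $X_0\to\overline{T}$ exists and the quotient cannot be realized as an honest open substack of product form by elementary means; there one must invoke Thomason's generic slice theorem. By contrast, the auxiliary steps---constancy of the stabilizer and freeness of the $\overline{T}$-action on a dense open---are routine semicontinuity statements, so I would treat them briefly and concentrate the work on the trivialization.
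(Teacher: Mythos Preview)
Your argument is correct for the torus case and is essentially a direct proof of Thomason's generic slice theorem in that setting: you use the $M$-grading on the coordinate ring to read off the generic stabilizer $H$, factor the action through $\overline{T}=T/H$, and then trivialize the free $\overline{T}$-action by choosing homogeneous rational functions realizing a basis of $N=X^*(\overline{T})$. This is more explicit and self-contained than what the paper does.

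The paper takes a different route. Rather than proving the torus case by hand and deferring the genuinely diagonalizable case to Thomason, it first \emph{reduces} the diagonalizable case to the torus case by the change-of-groups trick: embedding $T$ into a torus $T'$ and rewriting $\ix=[X/T]$ as $[(X\times_T T')/T']$. After that reduction it simply quotes Thomason (\cite[Lemma~4.3 and Proposition~4.10]{Tho:86d}) to obtain the open $U\cong (T'/H)\times W$. So your proof and the paper's are in some sense complementary: you supply the content of Thomason's slice theorem in the torus case but invoke Thomason for diagonalizable $T$; the paper invokes Thomason for the torus case but supplies the reduction from diagonalizable $T$ to a torus. If you combine your hands-on slice argument with the paper's change-of-groups reduction, you get a proof of the full statement that never cites Thomason's Proposition~4.10 at all --- the obstruction you identify (that $\overline{T}$ need not be a torus and a free $\mu_\ell$-torsor need not be Zariski-locally trivial) disappears once you have passed to an ambient torus $T'$, since then $T'/H$ is automatically a torus. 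A minor point: your reduction to a single irreducible component tacitly assumes $X$ is irreducible (otherwise the resulting open is not dense in $\ix$); this is harmless in the applications but should be said.
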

\begin{proof}[Proof of Lemma]
Embedding $T$ into a torus $T'$ the stack $\ix$ can presented as the quotient
$[(X \times_T T')/T']$. Since $T$ and $T'$ are assumed to be smooth,
$X \times_T T'$ is also reduced. Replacing $T$ with $T'$ we may assume that it
is a torus. By \cite[Lemma 4.3]{Tho:86d} there is a dense $T$-invariant
open subspace of $X$ which is a separated scheme. Replacing
$\ix$ by this open subset we may assume that $X$ is a reduced, separated scheme.

By \cite[Proposition 4.10]{Tho:86d} there is an affine open subspace
$U \subset X$ and a diagonalizable subgroup $T' \subset T$ with
quotient torus $T''$ such that $T$ acts via the homomorphism $T\to
T''$ and $T''$ acts freely on $U$. If we let $W = U/T$, then $U$ is
$T$-equivariantly isomorphic to $T/T' \times W$.  Since $[U/T]$ is
Deligne-Mumford it follows that $H=T'$ is a finite group and so we see
that $[U/T]$ is equivalent to $BH \times W$.
\end{proof}

Let $\iv$ be a vector bundle on $\ix$ and let $\iu$ be an open
substack. Let $\iv_{\iu}$ denote the restriction of $\iv$ to a vector
bundle on $\iu$ and let $\bV_{\iu}$ be its coarse moduli space.

\begin{lemma} \label{lem.genericcase}
Let $\iv' \to \iv \to \iv''$ be an exact sequence of vector bundles
on $\ix = [X/T]$ satisfying the hypothesis of the theorem.
Then there is a dense open substack $\iu \subset \ix$
such that the pullbacks $i_\iu^* \colon \CH_*(\bV_\iu) \to \CH_*(\bV'_\iu)$
is an isomorphism.
(Here $i_\iu \colon \bV'_\iu \to \bV_\iu$ is the inclusion of coarse spaces.)
\end{lemma}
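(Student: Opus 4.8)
The plan is to use Lemma \ref{lem.thomason} to reduce to a product situation over which $i_\iu$ becomes the zero section of a trivial vector bundle, so that homotopy invariance applies. First I would apply Lemma \ref{lem.thomason} to produce a dense open substack $\iu \subset \ix$ of the form $BH \times W$, with $H$ a finite diagonalizable group and $W$ a reduced affine scheme (reducedness of $W$ following from that of $\ix$, since $\mathcal{O}(W)$ is a subring of the reduced ring $\mathcal{O}(U)$). Writing $\iu = [W/H]$ with $H$ acting trivially on $W$, a vector bundle on $\iu$ is the same as an $H$-equivariant vector bundle on $W$, i.e. a finitely generated projective $\mathcal{O}(W)$-module with $H$-action. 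Since $H$ is linearly reductive and acts trivially on $\mathcal{O}(W)$, the isotypic projections are $\mathcal{O}(W)$-linear, so each such bundle decomposes as $\bigoplus_\rho \rho \otimes L_\rho$, the sum over irreducible characters $\rho$ of $H$ and $L_\rho$ the locally free multiplicity sheaf on $W$.

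Next I would shrink $\iu$ further. Because there are only finitely many characters $\rho$ and only three bundles $\iv'_\iu, \iv_\iu, \iv''_\iu$ to treat, and each multiplicity sheaf $L_\rho$ is free on a neighborhood of every generic point of $W$, I can replace $W$ by a dense open subscheme on which all the $L_\rho$ are simultaneously trivial; this shrinking preserves density and keeps $\iu$ an open substack of $\ix$, which is all the statement requires. Over the smaller $\iu$ the three bundles become $\iv_\iu = V \otimes \mathcal{O}_W$, $\iv'_\iu = V' \otimes \mathcal{O}_W$, $\iv''_\iu = V'' \otimes \mathcal{O}_W$ for an exact sequence $0 \to V' \to V \to V'' \to 0$ of $H$-representations. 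By the strong regular embedding hypothesis together with Theorem \ref{thm.strongembedd}(iii) the normal representation $V''$ is trivial, and by linear reductivity of $H$ the sequence splits, $V \cong V' \oplus V''$.

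Then I would identify the coarse spaces. The total space of $\iv_\iu$ is $[(V \times W)/H]$ with $H$ acting on $V$ and trivially on $W$, so $\mathcal{O}(V \times W)^H = \mathcal{O}(V)^H \otimes \mathcal{O}(W)$ and the coarse space is $\bV_\iu = (V/H) \times W$; likewise $\bV'_\iu = (V'/H) \times W$. Using $V = V' \oplus V''$ with $V''$ a trivial $H$-module gives $V/H = (V'/H) \times V''$, so the inclusion $i_\iu \colon \bV'_\iu \to \bV_\iu$ is exactly the zero section of the trivial vector bundle $(V'/H) \times W \times V'' \to (V'/H) \times W$ with fiber $V''$. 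Homotopy invariance of (higher) Chow groups then shows that pullback along the zero section of a vector bundle is an isomorphism, which is precisely the assertion that $i_\iu^* \colon \CH_*(\bV_\iu) \to \CH_*(\bV'_\iu)$ is an isomorphism; this is simply the family version of Lemma \ref{lem.gerbecase}.

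I expect the main obstacle to be the trivialization step: justifying that after shrinking to a dense open the $H$-equivariant bundles become constant in the $W$-direction (i.e. of the form $\textrm{representation} \otimes \mathcal{O}_W$). This requires separating the representation type from the multiplicity sheaves via linear reductivity and then trivializing the finitely many $L_\rho$ on a common dense open around the generic points of the reduced scheme $W$ (working component by component if $W$ is disconnected). One must also check that this shrinking is compatible with the strong regular embedding hypothesis, so that the normal representation $V''$ remains trivial and the zero-section description of $i_\iu$ is valid; granting these, the homotopy-invariance conclusion is immediate.
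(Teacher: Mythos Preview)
Your proposal is correct and follows essentially the same route as the paper: apply Lemma~\ref{lem.thomason} to get $\iu \simeq BH \times W$, decompose the restricted bundles into isotypic pieces $\rho \otimes L_\rho$, shrink $W$ to trivialize the finitely many multiplicity sheaves, and then repeat the argument of Lemma~\ref{lem.gerbecase}. The paper states this more tersely (writing each bundle as a sum $[V/H]\otimes\mathcal{E}$ and shrinking so each becomes $V\otimes\mathcal{O}_W$), but your isotypic decomposition and explicit coarse-space identification are simply a more detailed rendering of the same argument.
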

\begin{proof}
By Lemma \ref{lem.thomason} there is a dense open substack $\iu \subset \ix$
isomorphic to $BH \times W$ where $H$ is a finite diagonalizable group
and $W$ is affine. The restriction of a vector bundle $\iv$
to $U$ is a sum of vector bundles of the form $[V/H] \otimes {\mathcal E}$
where $V$ is a representation of $H$ and ${\mathcal E}$ is a vector bundle
on $W$. Therefore, given an exact sequence $\iv' \to \iv \to \iv''$
we can shrink $W$ and hence $\iu$ so that each of these bundles
restricted to $\iu$ is of the form $V \otimes {\mathcal O}_W$. 
We can then repeat the argument of Lemma \ref{lem.gerbecase}.
\end{proof}

\begin{lemma} \label{lemma.quotientlocalization} Let $\ix$ be a
  Deligne-Mumford stack with finite stabilizer and let $\iu$ be an open set with complement
  $\iz$ (with the reduced induced substack structure). Let $\bx, \bu,
  \bz$ be the coarse spaces of $\bx, \bu$ and $\bz$
  respectively.  Then the inclusion $\iz \to \ix$ (resp. $\iu \to
  \ix$) induces a closed (resp. open) immersion $\bz \to \bx$
  (resp. $\bu \to \bx$) and $\bu = \bx \smallsetminus \bz$.
\end{lemma}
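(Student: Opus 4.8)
The plan is to transport the topological decomposition of $\ix$ into $\iu$ and $\iz$ across the coarse space morphism $\pi \colon \ix \to \bx$, and then identify the coarse spaces of the two pieces with the corresponding open and closed subspaces of $\bx$. By the theorem of Keel and Mori, $\pi$ is proper and induces a bijection on geometric points (property (ii) of a coarse space); since a continuous, closed, bijective map is a homeomorphism, $\pi$ induces a homeomorphism $|\ix| \to |\bx|$. Finite stabilizer is inherited by the locally closed substacks $\iu$ and $\iz$, so they too admit coarse spaces. I write $\bu \subseteq \bx$ for the open subspace supported on the image of $|\iu|$ and $\bz \subseteq \bx$ for the reduced induced closed subspace supported on its complement, so that $\bu = \bx \smallsetminus \bz$ holds by construction. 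It then remains to show that $\bu$ and $\bz$ are the coarse spaces of $\iu$ and $\iz$, with the inclusions into $\bx$ realizing the induced maps on coarse spaces.

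For the open statement I would invoke that formation of the Keel--Mori coarse space commutes with flat base change on the coarse space. Since $\bu \hookrightarrow \bx$ is an open immersion, hence flat, and $\pi^{-1}(\bu) = \ix \times_\bx \bu$ coincides with $\iu$ as an open substack, the projection $\iu \to \bu$ is a coarse space morphism. By uniqueness of coarse spaces this identifies $\bu$ with the coarse space of $\iu$, and the induced morphism $\bu \to \bx$ is the open immersion we started with. This step uses no hypothesis beyond finite stabilizer.

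For the closed statement, observe that $\iz$ is reduced and $\pi$ carries it into the closed set $|\bz|$, so the composite $\iz \hookrightarrow \ix \xrightarrow{\pi} \bx$ factors uniquely through a morphism $\rho \colon \iz \to \bz$. I would verify that $\rho$ is a coarse space morphism using the standard characterization: $\rho$ should be proper and quasi-finite, a bijection on geometric points, and satisfy $\mathcal{O}_\bz \xrightarrow{\sim} \rho_* \mathcal{O}_\iz$. The bijection on geometric points is the homeomorphism of Step 1 restricted to the closed subsets. Properness of $\rho$ follows by cancellation: $\iz \to \bx$ is proper, being a closed immersion followed by the proper map $\pi$, while $\bz \hookrightarrow \bx$ is separated. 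Quasi-finiteness is then immediate from the bijection on points, so $\rho$ is finite.

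The main obstacle is the structure-sheaf identity $\mathcal{O}_\bz \xrightarrow{\sim} \rho_* \mathcal{O}_\iz$, equivalently the surjectivity that upgrades the finite map $\rho$ to a closed immersion. This is local on $\bx$, so I would pass to a strongly \'etale quotient chart $\ix \simeq [U/G]$ with $U = \Spec A$ affine and $G$ finite (local structure theorem, \cite[Lemma 2.2.3]{AbVi:02}), under which $\bx = \Spec A^G$, the substack $\iz$ becomes $[\Spec(A/J)/G]$ for the reduced invariant ideal $J = \sqrt{J}$, and $\bz = \Spec(A/J)^G$. The required surjectivity is exactly that $A^G \to (A/J)^G$ is onto, which holds once $G$ is linearly reductive, since taking invariants is then exact. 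This is where linear reductivity of the stabilizers (tameness, as in the applications such as Theorem~\ref{thm.quotientvbs}) enters, and it is genuinely needed: the earlier characteristic-$2$ example, with $G=\ZZ_2$ acting on $k[x,y]$ by swapping coordinates and $J=(x+y)$, exhibits a reduced $G$-invariant ideal for which $A^G \to (A/J)^G$ fails to be surjective, so that $\bz \to \bx$ is finite but not a closed immersion. Granting exactness of invariants, $\rho$ is the coarse space morphism, $\bz$ is the reduced closed subspace $\bx \smallsetminus \bu$, and all three assertions follow.
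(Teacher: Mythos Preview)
Your approach is correct and, like the paper, ultimately reduces to a local quotient chart $[\Spec A/G]$; but you carry the argument further and more carefully. The paper's proof simply observes that for the quotient map $\pi\colon\Spec A\to\Spec A^G$ one has $\bu=\pi(W)$ and $\bz=\pi(Y)$, and invokes the geometric-quotient property (fibers are $G$-orbits) to conclude the set-theoretic complementarity $\pi(W)=\Spec A^G\smallsetminus\pi(Y)$. It does not explicitly verify that $\bz\to\bx$ is a closed immersion at the level of structure sheaves.

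Your treatment does. After handling the open piece via flat base change of coarse spaces, you reduce the closed piece by a properness/cancellation argument to the surjectivity of $A^G\to(A/J)^G$, and you correctly note that this surjectivity requires $G$ to be linearly reductive, i.e.\ tameness. The characteristic-$2$ swap example you cite indeed produces a reduced $G$-stable ideal $J$ for which $A^G\to(A/J)^G$ fails to be onto, so without tameness the induced map on coarse spaces is only a finite universal homeomorphism onto its image, not a closed immersion. The lemma as stated omits the tameness hypothesis; in the paper it is only invoked inside the proof of Theorem~\ref{thm.quotientvbs}, which does assume tameness, so the application is unaffected---but your analysis makes the hidden dependence explicit, which is a genuine improvement over the paper's argument.
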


 \begin{proof}[Proof of Lemma \ref{lemma.quotientlocalization}] Again
     using the structure theorem for coarse moduli spaces we can
     reduce to the case that $\ix = [\Spec A/G]$ for some finite group
     $G$. Then $\iu = [W/G]$ where $W$ is a $G$-invariant open
     set. Let $Y = \Spec A \smallsetminus W$. The coarse moduli space
     of $\iu$ is $\pi(W)$ where $\pi \colon \Spec A \to \Spec A^G$ is
     the quotient map. Likewise the coarse moduli space of $\iz$ is
     $\pi(Y)$.
     
     Since the morphism $\pi$ is a geometric quotient its geometric
     fibers are $G$-orbits of geometric points. It follows that
$\pi(W)$ is open in $X = \Spec A^G$ and equal to 
$\pi(\Spec A) \smallsetminus \pi(Z)$.
\end{proof}

\begin{proof}[Proof of Theorem \ref{thm.quotientvbs}]
Observe that if $\iv$ is a vector bundle
on $\ix$ and  $\iu \subset \ix$ is an open substack then
the complement of $\iv|_{\iu}$ (with its reduced induced stack structure)
is $\iv_{\iz}$ where $\iz$ is the complement of $\iu$ with its reduced induced stack structure. Hence by Lemma \ref{lemma.quotientlocalization}
the complement of the coarse space $\bV_\iu$ is the coarse space
$\bV_\iz$. 

Now by Lemma \ref{lem.genericcase} there is an open substack $\iu
\subset \iv$ such that the inclusion $\bV'_{\iu} \to \bV_{\iu}$
induces a pullback isomorphism of (higher) Chow groups
$\CH^d(\bV'_\iu,i) \to \CH^d(\bV_{\iu},i)$ for every $d, i$.  By
Noetherian induction we may assume that the inclusion $\bV'_{\iz} \to
\bV_{\iz}$ also induces a pullback isomorphism of (higher) Chow
groups.  The theorem follows by applying the localization exact
sequence for higher Chow groups \eqref{eq.localization}.
\end{proof}

\begin{proof} [Proof of Corollary \ref{cor.cotangent}]
By Theorem \ref{thm.orbifoldproducts} we know that the
pullback $(I\iota)^*  \colon \CH^*(I(T\ix) \to I([T^*X/G])$
commutes with the orbifold product. Thus it suffices
to show that $(I\iota)^*$ is an isomorphism of abelian groups.
This follows because both $T^*\ix$ and $[T^*X/G]$ are vector bundles
over $\ix = [X/G]$ so $IT^*\ix$ and $I[T^*X/G]$ are both vector bundles
over $I\ix$. Hence the pullback $I\iota^*$ is an isomorphism.
\end{proof}

\begin{remark} The methods used to prove Theorem \ref{thm.cotangentpackage}
and Theorem \ref{thm.quotientvbs} yield analogous isomorphisms
for the $K$-theory of coherent sheaves. Since $K$-theory is naturally
defined for schemes over an arbitrary base we do not need to assume that the base scheme
is a field. Also, there is a localization long exact sequence for the 
higher $K$-theory of algebraic spaces so we can obtain the $K$-theory
result in more generality. In particular, we may assume that $X$ is a smooth
algebraic space defined over a Noetherian scheme $S$
and that $T$ is diagonalizable - i.e.,  isomorphic to a closed subgroup
of a torus.
\end{remark}

\section{Application: Hypertoric Stacks and Lawrence Toric Stacks}
\label{sec.hyperkahler}
A natural algebraic way to
construct hypertoric varieties is via geometric invariant theory (GIT)
as follows:

Fix integers $d,n$ with $n \geq d$ and let $V$ be an $n$-dimensional
representation of the rank $d$ torus $T$. Choosing a diagonalizing
basis for the action of $T_d$ let $A = (a_{ij}) \in \ZZ^{d \times n}$
be the matrix of weights for the action of $d$. We denote by $a_k$ the $k$-th
column vector of $A$. We assume that $A$ has
maximal rank over $\QQ$ which is equivalent to assuming that the
generic stabilizer is finite. 

There is a natural action of $T$ on $V \times V^*$ and with an
appropriate choice of basis the weight matrix for this action is
$A^\pm = (a_1 \ldots a_n -a_1 \ldots -a_n) \in \ZZ^{d \times 2n}$.

If $\theta \in \ZZ^d$ is a character of $T$ then we can consider the
$\theta$-stable and semi-stable loci in $V \times V^*$. For generic
choice of $\theta$, $(V \times V^*)^{s} = (V \times V^*)^{ss}$. Note
that the coordinate ring of $V \times V^*$ 
contains invariant elements so the GIT quotient $\bx(A^\pm, \theta)
:=(V \times V^*)_\theta//T_d$ is not projective. However, it is
semi-projective meaning that it is semi-projective 
over $\Spec \Sym((V \times V^*))^{T}$ and the quotient is a
$2n -d$-dimensional toric variety called a {\em Lawrence toric
  variety}.  The quotient stack $\ix(A^{\pm},\theta) := [(V \times
V^*)^{s}/T]$ is called a {\em Lawrence toric stack} and its coarse
moduli space is $\bx(A^{\pm}, \theta)$.

The representation $V \times V^*$ has a natural $T$-invariant 
algebraic symplectic
pairing $\mu \colon (V \times V^*) \to \Lie(T_d)^* = \CC^d$. If we
choose coordinates $(x_1, \ldots ,x_n)$ on $V$ and dual coordinates
$(y_1, \ldots , y_n)$ on $V^{*}$ so that $T_d$ acts on $x_i$ with
weight $a_i$ and on $y_i$ with weight $-a_i$ then $\mu$ is given by
the formula
$$(x_1, \ldots , x_n, y_1, \ldots y_n) \mapsto (\mu_1, \ldots , \mu_n)$$
where $\mu_i = \sum_{j} a_{ij} x_j y_j$.

The quotient $\by(A,\theta) = \mu^{-1}(0)//_\theta T$ is the associated
{\em hypertoric variety}. 
Following Jiang and Tseng \cite{JiTs:08} we refer to the quotient stack
$$\iy(A,\theta) :=
[\left(\mu^{-1}(0) \cap (V \times V^*)^s\right)/T]$$
as a hypertoric stack. The variety $\by(A,\theta)$ is the 
coarse space of $\iy(A,\theta)$.

\begin{example}
  If $A = (a_0, \ldots a_n)$
is a $1 \times (n+1)$ matrix with all $a_i$ positive,  denote by
  $\Pro(a_0, \ldots , a_n)$ the quotient stack
  $[\A^{n+1}\smallsetminus \{0\}/\G_m]$ where $\G_m$ acts with weights
$(a_0, \ldots , a_n)$. 
If $\theta$ is positive then 
$\iy(A,\theta) =T^*\Pro(a_0,\ldots , a_n)$ and the Lawrence toric stack
$\ix(A^\pm, \theta)$ equals $[T^*(\A^{n+1}\smallsetminus \{0\}))/T]$.
\end{example}

\begin{theorem}\label{thm.localstructure}

The stack $\ix(A^\pm,\theta)$ has a Zariski open cover by open
sets  $\iu$ 
each isomorphic to $(\iy(A,\theta) \cap \iu) \times \A^d$ such that 
under this isomorphism $\iota$ corresponds to a section of the projection
$\iu \to (\ix(A,\theta) \cap \iu)$.

In particular the inclusion $\iota \colon \iy(A,\theta)
\hookrightarrow \ix(A^{\pm}, \theta)$ is a strong regular embedding
of smooth Deligne-Mumford stacks.
\end{theorem}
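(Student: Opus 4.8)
The plan is to build the Zariski cover by explicit toric charts on which the moment-map functions $\mu_1,\dots,\mu_d$ form part of a coordinate system; the product decomposition is then immediate linear algebra, and the ``in particular'' clause follows formally from Theorem~\ref{thm.strongembedd}. The starting observation is that each $\mu_i=\sum_j a_{ij}x_jy_j$ is $T$-invariant, since $x_j$ and $y_j$ carry opposite weights $\pm a_j$. Thus $\mu$ descends to a morphism $\ix(A^\pm,\theta)\to \Lie(T)^*=\A^d$ with $\iy(A,\theta)=\mu^{-1}(0)$, and the $\A^d$-factor in the claimed product will be $\Lie(T)^*$ with its coordinates $\mu_i$; because these are $T$-invariant it is an honest scheme factor rather than a trivial-weight stacky one.

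For the charts, given a basis $B\subseteq\{1,\dots,n\}$ (that is, $|B|=d$ and $\det A_B\neq 0$) together with a splitting $B=B_x\sqcup B_y$, I would consider the $T$-invariant open $C=\{x_j\neq 0\ (j\in B_y),\ y_j\neq 0\ (j\in B_x)\}$ inside $V\times V^*$. A direct computation shows the Jacobian of $(\mu_1,\dots,\mu_d)$ with respect to the variables $(y_j)_{j\in B_y}$ and $(x_j)_{j\in B_x}$ equals $\det A_B\cdot\prod_{j\in B_y}x_j\cdot\prod_{j\in B_x}y_j$, which is invertible on $C$. Hence $\mu$ together with the remaining $2n-d$ coordinates is a coordinate system on $C$, yielding a $T$-equivariant isomorphism $C\cong(\mu^{-1}(0)\cap C)\times\A^d$ under which $\mu^{-1}(0)\cap C$ is the $0$-slice. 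Passing to quotient stacks gives $\iu_B:=[C/T]\cong(\iy(A,\theta)\cap\iu_B)\times\A^d$ with $\iota$ identified with the zero section, i.e.\ a section of the projection onto the first (hypertoric) factor.

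The step I expect to be the main obstacle is verifying that these charts lie in, and cover, the stable locus. Restricting to triples $(B,B_x,B_y)$ with $\theta$ in the interior of the cone $\operatorname{cone}(\{a_j\}_{j\in B_y}\cup\{-a_j\}_{j\in B_x})$, every point of $C$ has this (full-dimensional) cone contained in the cone spanned by its nonvanishing weights, so $\theta$ lies in the interior of the larger cone and the point is $\theta$-stable; thus $\iu_B$ is open in $\ix(A^\pm,\theta)$. Conversely, for a stable point the nonvanishing weights span $\QQ^d$ and contain $\theta$ in their interior, so by the conical Carath\'eodory theorem --- here using genericity of $\theta$ to place it in the interior of a simplicial subcone on $d$ linearly independent support weights --- one extracts a basis $B$ and splitting of the required type whose chart contains the point. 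This covering argument, together with the precise toric-GIT description of stability for generic $\theta$, is the crux; the rest is formal.

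Granting the cover, the product charts prove the first assertion. For the ``in particular'', on each $\iu_B$ the normal bundle $N_\iota$ is trivial, being that of the zero section of a trivial $\A^d$-bundle --- indeed globally $N_\iota=\mu^*\underline{\Lie(T)^*}$ is already trivial --- so Theorem~\ref{thm.strongembedd} yields that $\iota$ is a strong regular embedding. Regularity of $\iota$ is automatic, since on each chart $\mu_1,\dots,\mu_d$ are part of a coordinate system and hence a regular sequence; smoothness of $\ix(A^\pm,\theta)$ holds because it is the quotient of an open subscheme of $V\times V^*$ by $T$ with finite stabilizers, and smoothness of $\iy(A,\theta)$ follows as it is a factor of the smooth stack $\iu_B$.
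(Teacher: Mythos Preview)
Your approach is essentially the paper's: the charts $C$ indexed by a column basis $B$ with a sign splitting $B=B_x\sqcup B_y$ and the condition that $\theta$ lies in the interior of $\operatorname{cone}(\{a_j\}_{j\in B_y}\cup\{-a_j\}_{j\in B_x})$ are exactly the open sets $U_{\sigma(C,\theta)}$ of \cite[Corollary~4.4]{HaSt:02} that the paper invokes, and your product decomposition via the moment-map coordinates is the same change of variables the paper writes out explicitly in Proposition~\ref{prop.crucial}. Your self-contained Carath\'eodory covering argument is a clean replacement for the citation to \cite{HaSt:02}.

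One point to tighten: an invertible Jacobian only gives that your coordinate change is \'etale, not that it is an isomorphism onto an open set. What makes it an actual algebraic isomorphism here is that, on $C$, the equations $\mu_i=z_i$ are \emph{linear} in the $d$ unknowns $\{y_j\}_{j\in B_y}\cup\{x_j\}_{j\in B_x}$ (each $x_jy_j$ involves at most one unknown), with coefficient matrix $(a_{ij}c_j)$ whose determinant $\det A_B\cdot\prod c_j$ is a unit on $C$; Cramer's rule then provides the explicit polynomial inverse. This is implicit in your setup but should be said, since otherwise the ``coordinate system'' claim is only formal-local. The paper sidesteps this by writing down the inverse map directly. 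Note also that the paper packages the chart computation as the slightly stronger Proposition~\ref{prop.crucial} (allowing an arbitrary $\G_m^{2n}$-invariant closed $W\subset U_\sigma$ transverse to $\mu^{-1}(0)$), which is not needed for Theorem~\ref{thm.localstructure} itself but is reused in the proof of Theorem~\ref{thm.chowhypertoric}.
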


Applying Corollary \ref{cor.resolution} we obtain as a corollary the following
result about resolutions of singularities.
\begin{corollary}
  Let $\tilde{\bx} \to \bx(A^{\pm},\theta)$ be a canonical resolution
  of singularities of the Lawrence toric variety $\bx(A^{\pm},\theta)$
  and set $\tilde{\bx} := \by(A,\theta) \times_{\bx(\A^{\pm,\theta)}}
  \tilde{\bx}$. Then $\tilde{\by} \to \by(A,\theta)$ is a
  resolution of singularities of the hypertoric variety $\by(A,\theta)$.
\end{corollary}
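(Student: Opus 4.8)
The plan is to obtain this corollary as an immediate application of Corollary~\ref{cor.resolution} to the strong regular embedding furnished by Theorem~\ref{thm.localstructure}. First I would fix the correct reading of the fiber product in the statement: one sets $\tilde{\by} := \by(A,\theta) \times_{\bx(A^\pm,\theta)} \tilde{\bx}$, the base change of the canonical resolution $\tilde{\bx} \to \bx(A^\pm,\theta)$ along the inclusion of coarse spaces $i \colon \by(A,\theta) \hookrightarrow \bx(A^\pm,\theta)$. By Theorem~\ref{thm.localstructure} the morphism $\iota \colon \iy(A,\theta) \hookrightarrow \ix(A^\pm,\theta)$ is a strong regular embedding of smooth Deligne--Mumford stacks, and by construction their coarse spaces are exactly the hypertoric variety $\by(A,\theta)$ and the Lawrence toric variety $\bx(A^\pm,\theta)$. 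Thus $i$ is precisely the regular embedding of coarse spaces attached to the strong regular embedding $\iota$, and $\tilde{\by}$ is exactly the object $Y \times_X \tilde{X}$ appearing in Corollary~\ref{cor.resolution}.

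Next I would verify the hypotheses of Corollary~\ref{cor.resolution}, namely that $\iota$ is a strong regular embedding of smooth, tame Deligne--Mumford stacks over a perfect field. Smoothness and the strong-regular-embedding property are exactly the conclusion of Theorem~\ref{thm.localstructure}. Since the hypertoric construction is carried out over $\CC$, a perfect field of characteristic zero, the perfect-field hypothesis is immediate, and tameness is automatic because $|\Aut(x)|$ is prime to $\characteristic \CC = 0$ for every geometric point $x$. This is consistent with the remark following Corollary~\ref{cor.resolution}, since resolution of singularities is presently available only in characteristic zero. With these hypotheses in hand, Corollary~\ref{cor.resolution} applies verbatim and yields that $\tilde{\by} = \by(A,\theta) \times_{\bx(A^\pm,\theta)} \tilde{\bx}$ is a resolution of singularities of $\by(A,\theta)$.

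The main point to appreciate is that there is essentially no genuine obstacle at this stage: all of the substantive work lives upstream. The geometric input is Theorem~\ref{thm.localstructure} (that the embedding is \emph{strong}, via the local product decomposition $\iu \cong (\iy(A,\theta)\cap\iu)\times\A^d$), together with the chain Theorem~\ref{thm.localsection}--Proposition~\ref{prop.resolution} behind Corollary~\ref{cor.resolution}, which shows that a strong regular embedding of smooth stacks induces a \emph{tubular} regular embedding of coarse spaces --- \'etale locally a section of a smooth morphism --- so that any resolution functor functorial for smooth morphisms pulls back to a resolution. The only item I would confirm explicitly is that the fiber product in the conclusion is the scheme-theoretic fiber product of coarse spaces rather than a stacky one; since $\by(A,\theta)$ and $\bx(A^\pm,\theta)$ are GIT quotients of (an open subset of) affine space by a torus, hence varieties, and $\tilde{\bx}$ is their resolution, all three objects are schemes and no subtlety arises. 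The statement then reduces cleanly to the abstract corollary.
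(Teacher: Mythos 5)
Your proposal is correct and follows essentially the same route as the paper, which obtains this corollary immediately by applying Corollary \ref{cor.resolution} to the strong regular embedding $\iota \colon \iy(A,\theta) \hookrightarrow \ix(A^{\pm},\theta)$ of smooth Deligne--Mumford stacks provided by Theorem \ref{thm.localstructure}. Your additional housekeeping --- correcting the typo so that the fiber product is named $\tilde{\by}$, and checking tameness and the perfect-field hypothesis in characteristic zero --- is accurate and matches what the paper leaves implicit.
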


As in the case of cotangent bundles we also have a result
about integral Chow groups and 
Chow rings as well.
\begin{theorem} \label{thm.chowhypertoric}
The pullback $\iota^* \colon \CH^*(\ix(A^\pm, \theta)) \to \CH^*(\iy(A,\theta))$
is an isomorphism of integral Chow rings of Deligne-Mumford stacks.

Also, if $\by(A,\theta) \stackrel{i} \hookrightarrow \bx(A^{\pm}, \theta)$ 
is the inclusion of a hypertoric variety into the corresponding Lawrence toric variety then
the pullback $i^* \colon \CH_k(\bx(A^{\pm},\theta)) \to \CH_{k-d}(\by(A,\theta))$ is
an isomorphism of integral Chow rings.


The analogous statements also hold for the $K$-theory of coherent sheaves.
\end{theorem}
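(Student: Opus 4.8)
The plan is to deduce both isomorphisms from the Zariski-local product structure supplied by Theorem \ref{thm.localstructure}, which says that $\ix(A^\pm,\theta)$ is covered by (necessarily $T$-invariant) Zariski opens $\iu$ on which $\iota$ is the zero section of a trivial bundle $\iu \cong (\iy(A,\theta)\cap\iu)\times\A^d$. Over a single such open, homotopy invariance of (equivariant, higher) Chow groups shows that pullback along the zero section is an isomorphism; the real work is to glue these local isomorphisms. Rather than running Noetherian induction on complements as in the proof of Theorem \ref{thm.quotientvbs} --- awkward here, since an arbitrary closed substack of $\ix$ need not inherit a regular-embedding or product structure --- I would use Zariski (Mayer--Vietoris) descent for Bloch's higher Chow groups together with the localization sequence \eqref{eq.localization}.

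For the first (stack) statement I work with the equivariant higher Chow groups of $\ix = [(V\times V^*)^s/T]$ and $\iy = [(\mu^{-1}(0)\cap(V\times V^*)^s)/T]$ via finite-dimensional approximation. The trivializing factor in Theorem \ref{thm.localstructure} is always the constant normal direction $\Lie(T)^*$, so any finite intersection $\iu_1\cap\cdots\cap\iu_m$ is again of the form $(\iy\cap\iu_1\cap\cdots\cap\iu_m)\times\A^d$ with $\iota$ the zero section; hence $\iota^*$ is an isomorphism on all higher Chow groups over each iterated intersection. Feeding these into the Mayer--Vietoris sequences of the cover and inducting on the number of opens via the five lemma yields that $\iota^*\colon\CH^*(\ix(A^\pm,\theta))\to\CH^*(\iy(A,\theta))$ is a group isomorphism. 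Because $\iota$ is an lci morphism of smooth stacks its pullback is a ring homomorphism, and a bijective ring homomorphism is a ring isomorphism.

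For the second (coarse-space) statement I first descend the local structure. By Lemma \ref{lemma.quotientlocalization} the coarse spaces $\bU$ of the $\iu$ form a Zariski open cover of $\bx(A^\pm,\theta)$, and since $T$ acts trivially on the $\A^d=\Lie(T)^*$ factor the product descends to $\bU \cong (\by(A,\theta)\cap\bU)\times\A^d$ with $i$ the zero section. Exactly as above, $i^*$ is an isomorphism over each open and each intersection, and the hypothesis that $\bx(A^\pm,\theta)$ (hence $\by(A,\theta)$) is a scheme lets me invoke \eqref{eq.localization} and Zariski--Mayer--Vietoris for higher Chow to glue, giving the isomorphism $i^*\colon\CH_k(\bx(A^\pm,\theta))\to\CH_{k-d}(\by(A,\theta))$; the ring statement again follows from $i$ being lci. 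The $K$-theory assertions I would prove by the identical argument with $G$-theory in place of Chow, using homotopy invariance and the localization long exact sequence for the $G$-theory of schemes and algebraic spaces.

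The step I expect to be the main obstacle is bookkeeping the compatibility of the trivializations of Theorem \ref{thm.localstructure} on overlaps: I must check that the $\A^d$-factors glue so that every finite intersection is genuinely a product with $\iota$ the common zero section, since this compatibility is precisely what makes the Mayer--Vietoris induction run. A secondary point requiring care is that Zariski descent for higher Chow groups is only available for schemes, which is why the coarse-space statement carries the scheme hypothesis already flagged in the Remark following Theorem \ref{thm.quotientvbs}.
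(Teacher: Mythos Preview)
Your approach is genuinely different from the paper's and the overall strategy is reasonable, but the step you flag as ``bookkeeping'' is the real issue, and your justification for it is not sufficient.

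The paper does \emph{not} glue the opens $U_\sigma$ via Mayer--Vietoris. Instead it uses the semi-projectivity of $\bx(A^\pm,\theta)$ to produce a Bialynicki--Birula filtration by a carefully chosen one-parameter subgroup. Each locally closed stratum is a closed $\G_m^{2n}$-invariant subvariety of a \emph{single} $U_{\sigma(C,\theta)}$, transverse to $\mu^{-1}(0)$, so Proposition~\ref{prop.crucial} applies directly to every stratum. One then runs the localization long exact sequence \eqref{eq.localization} up the filtration. The point is that Proposition~\ref{prop.crucial} is tailored to closed invariant subsets of one chart, not to overlaps of charts, and the BB decomposition produces exactly such subsets.

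Your argument hinges on the claim that each finite intersection $U_{\sigma_1}\cap\cdots\cap U_{\sigma_m}$ is again $T$-equivariantly of the form $(\mu^{-1}(0)\cap U_I)\times\A^d$ with $\iota$ a section. You justify this by saying the trivializing factor is ``always the constant normal direction $\Lie(T)^*$''. That the conormal bundle is globally trivialized by $\mu$ is true, but it does not imply the local product structures glue: if you simply restrict the trivialization $\phi_{\sigma_1}$ of Proposition~\ref{prop.crucial} to an overlap $U_{\sigma_1}\cap U_{\sigma_2}$, the image of the overlap in $(\mu^{-1}(0)\cap U_{\sigma_1})\times\A^d$ is typically \emph{not} a product over the full $\A^d$ --- some of the $\A^d$-coordinates acquire $\G_m$-constraints whenever $\sigma_2$ contains a variable that was ``free'' for $\sigma_1$. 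Concretely, with $A=\begin{pmatrix}1&0&1\\0&1&1\end{pmatrix}$ and $\theta=(2,1)$, the overlap $U_{\{x_1,x_2\}}\cap U_{\{y_2,x_3\}}$ pulled back via the $\sigma_1$-trivialization is $\{x_3\neq 0\}\times\A^1\times\G_m$, not $(\text{something})\times\A^2$. One \emph{can} build a fresh product structure on this overlap by solving the moment equations for a different set of variables (here $y_1,y_3$), with $\mu$ itself as the $\A^d$-coordinate, but establishing this in general --- that on every toric overlap one can choose $d$ variables outside the invertible set for which the moment equations are uniquely solvable --- is a genuine combinatorial/transversality argument, essentially of the same flavor as the induction inside Proposition~\ref{prop.crucial}. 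It is not bookkeeping, and without it your Mayer--Vietoris induction does not start.

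If you supply that argument your route would work and would have the advantage of not invoking semi-projectivity or the BB decomposition. As written, though, the paper's filtration-plus-localization approach avoids the overlap problem entirely by never needing the product structure on anything other than closed invariant subsets of a single chart.
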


\begin{proof}[Proof of Theorem \ref{thm.localstructure}]
By \cite[Corollary 4.4]{HaSt:02} 
a point is $\theta$-stable if it lies in the complement of the 
vanishing of the irrelevant ideal
$$B_\theta = \langle \prod_{\sigma(C,\theta)}: C \text{ any column basis of } 
A\rangle$$ 
where $\sigma(C,\theta)$ is defined as follows: If $C = \{a_{i1},
\ldots , a_{id}\}$ then there are unique non-zero\footnote{The assumption that $\theta$ is generic ensures that all of the $\lambda_i$ are non-zero. The condition that the $\lambda_i$ are non-zero for each column basis of $A$ is equivalent to the condition that every $\theta$-semi-stable point is stable.} 
rational numbers
$\lambda_1, \ldots , \lambda_d$ such that
$$\lambda_1 a_{i1} + \lambda_2 a_{i2} + \ldots + \lambda_d a_{id} = \theta$$
and 
$$\sigma(C, \theta) = \{x_{i_j} \colon \lambda_j > 0\} \bigcup \{y_{i_l} \colon \lambda_l < 0\}.$$
Each set of indices in $\sigma(C,\theta)$ corresponds to a maximal 
cone in the fan of the toric stack $\ix(A,^\pm, \theta)$.
Let $U_{\sigma(C,\theta)} \subset V \times V^*$ be the principal open set
corresponding to the monomial $x_{\sigma(C,\theta)} = \prod_{\sigma(C,\theta)} x_{i_j}y_{i_l}$ and let $\iu_{\sigma(C,\theta)} = [U_{\sigma(C,\theta)}/T]$.
By construction the $U_{\sigma(C,\theta)}$ form an open cover of
$(V \times V^*)^s$. Theorem \ref{thm.localstructure} then follows from the following Proposition.
\end{proof}
\begin{proposition} \label{prop.crucial}
Let $\sigma = \sigma(C,\theta)$ for some fixed subset $C$ of the columns 
of $A$. If $W \subset U_\sigma$ is a $\G_m^{2n}$-invariant irreducible 
closed subset transverse to $\mu^{-1}(0)$
then $W$ is $T$-equivariantly isomorphic to $(\mu^{-1}(0) \cap W) 
\times {\mathbb A}^d$ where $T$ acts trivially on $\A^d$. Under
this isomorphism the inclusion $W\cap \mu^{-1}(0) \hookrightarrow W$
corresponds to a section of the projection $(\mu^{-1}(0) \cap V) \times \A^d \to \mu^{-1}(0) \cap V$.
\end{proposition}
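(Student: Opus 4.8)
The plan is to reduce the statement to linear algebra on the chart $U_\sigma$ by exploiting the $\G_m^{2n}$-invariance of $W$ together with the invertibility of the $\sigma$-coordinates. First I would note that the ideal of a $\G_m^{2n}$-invariant closed subscheme of $V\times V^*$ is a monomial ideal, and that a monomial \emph{prime} is generated by a subset of the coordinate functions; taking the closure of $W$ in $V\times V^*$ therefore shows that $W$ is the trace on $U_\sigma$ of a coordinate subspace, i.e. $W=\{z_j=0:j\in S\}\cap U_\sigma$ for a set $S$ of coordinates. Since $W\subset U_\sigma$ is nonempty, $S$ is disjoint from $\sigma$. Write $C=\{i_1,\ldots,i_d\}$ and, for each $k$, let $s_k\in\sigma$ be the invertible coordinate attached to $i_k$ (either $x_{i_k}$ or $y_{i_k}$) and let $s_k'$ denote its partner.

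Next I would produce the $\A^d$-factor by a change of variables on $U_\sigma$. From $\mu_i=\sum_j a_{ij}x_jy_j$ a direct computation gives $\partial\mu_i/\partial s_k'=a_{i,i_k}s_k$, so the Jacobian $[\partial\mu_i/\partial s_k']_{i,k}$ equals $A_C\cdot\mathrm{diag}(s_1,\ldots,s_d)$, where $A_C$ is the $d\times d$ submatrix of $A$ on the columns of $C$. Because $C$ is a column basis, $A_C$ is invertible, and the $s_k$ are units on $U_\sigma$, so this matrix is invertible over $\mathcal{O}(U_\sigma)$. Hence the partner functions $(s_1',\ldots,s_d')$ may be traded for the $T$-invariant functions $(\mu_1,\ldots,\mu_d)$: solving $\mu-\rho=A_C\,\mathrm{diag}(s)\,\mathbf{s}'$, where $\rho_i=\sum_{j\notin C}a_{ij}x_jy_j$ involves only the remaining coordinates, expresses each $s_k'$ as a regular function of $s_k^{-1}$, the $\mu_i$, and the non-$C$ coordinates. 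This yields a $T$-equivariant isomorphism $U_\sigma\cong(\mu^{-1}(0)\cap U_\sigma)\times\A^d$, where $\A^d=\Spec k[\mu_1,\ldots,\mu_d]$ carries the trivial $T$-action and $\mu^{-1}(0)$ is the zero fiber.

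The crux is to show that transversality forces $S$ to avoid every partner $s_k'$, since only then is $W$ ``vertical'' for the projection to $\A^d$. I would argue by contradiction at the distinguished point $p_0\in W$ at which all $\sigma$-coordinates equal $1$ and every other coordinate vanishes. Each monomial $x_jy_j$ vanishes at $p_0$ (each pair contains a non-$\sigma$ factor), so $p_0\in W\cap\mu^{-1}(0)$. Restricting $d\mu_i=\sum_j a_{ij}(y_j\,dx_j+x_j\,dy_j)$ to $T_{p_0}W$, using $dz_j|_{T_{p_0}W}=0$ for $j\in S$ and that all non-$\sigma$ coordinates vanish at $p_0$, the only surviving terms come from indices $i_k\in C$ with free partner, each contributing $a_{i,i_k}\,ds_k'$. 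Thus $d\mu_1|_{T_{p_0}W},\ldots,d\mu_d|_{T_{p_0}W}$ all lie in the span of $\{ds_k':s_k'\notin S\}$; if some partner $s_{k_0}'$ lay in $S$ this span would have dimension $\le d-1$, making the $d$ covectors dependent and contradicting transversality at $p_0$.

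Finally, with $S$ disjoint from $\sigma$ and from all partners, $S$ consists only of coordinates $x_j,y_j$ with $j\notin C$, which are base coordinates in the decomposition above. Hence the defining equations of $W$ do not involve the directions $\mu_1,\ldots,\mu_d$, and the isomorphism $U_\sigma\cong(\mu^{-1}(0)\cap U_\sigma)\times\A^d$ restricts to a $T$-equivariant isomorphism $W\cong(\mu^{-1}(0)\cap W)\times\A^d$ with $T$ trivial on $\A^d$. The locus $\mu=0$ becomes $(\mu^{-1}(0)\cap W)\times\{0\}$, so the inclusion $\mu^{-1}(0)\cap W\hookrightarrow W$ is the zero section of the projection to $\mu^{-1}(0)\cap W$, as claimed. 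I expect the main obstacle to be the transversality step: choosing the right test point $p_0$ and checking that exactly the free-partner directions survive in $d\mu_i|_{T_{p_0}W}$. The monomial-ideal reduction and the invertible change of variables are then routine.
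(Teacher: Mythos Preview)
Your proof is correct and takes a somewhat different route from the paper's. Both arguments begin identically: the $\G_m^{2n}$-invariance forces $W$ to be the trace on $U_\sigma$ of a coordinate subspace $\{z_j=0:j\in S\}$, and one checks that trading the partner coordinates $s_k'$ for the invariant functions $\mu_1,\ldots,\mu_d$ gives a $T$-equivariant isomorphism $U_\sigma\cong(\mu^{-1}(0)\cap U_\sigma)\times\A^d$. The paper writes this isomorphism as an explicit formula after row-reducing $A$; your Jacobian computation $[\partial\mu_i/\partial s_k']=A_C\cdot\mathrm{diag}(s_1,\ldots,s_d)$ is the cleaner version of the same step.

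The genuine difference is in handling the set $S$. The paper allows the possibility that $S$ contains a partner coordinate $s_k'$ and treats that case by induction on $\dim V$: one splits off the invertible $\sigma$-factor $\G_m$, observes that the moment map on $W$ no longer involves the corresponding column of $A$, and appeals to the proposition for the deleted matrix $A'$. You instead rule this case out directly by testing transversality at the point $p_0$ where all $\sigma$-coordinates equal $1$ and all others vanish: since $d\mu_i|_{p_0}=\sum_k a_{i,i_k}\,ds_k'$, the restrictions $d\mu_i|_{T_{p_0}W}$ all lie in the span of $\{ds_k':s_k'\notin S\}$, and if any partner lay in $S$ this span would have dimension at most $d-1$, contradicting transversality. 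This argument is correct, and it shows that under the stated hypothesis the paper's inductive branch is in fact vacuous. Your approach is therefore more direct and avoids the bookkeeping of the induction; the paper's induction would be the more robust route if one wanted to relax ``transverse'' to a weaker dimension condition, though the inductive step would then need to be made more precise.
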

\begin{proof}[Proof of Proposition \ref{prop.crucial}]
After reordering the coordinates and we may assume that
$x_{\sigma} = (x_1 x_2 \ldots x_k)(y_{k+1} y_{k+2} \ldots y_d)$
for some $k$ with $0 \leq k \leq d$. Since $A$ has rank $d$
we may also perform row operations over $\ZZ$ to ensure that $a_{ii} \neq 0$
for $i \leq d$.
Then on $U_\sigma$ the hypertoric equations $\{\sum a_{ij}x_jy_j\}_{i=1}^d$
can be rewritten as 
$x_i + {1\over{y_i}}\sum_{j\neq
    i}(a_{ij}/a_{ii}) x_jy_j$ if $i > k$
and
$y_i + {1\over{x_i}}\sum_{j\neq
    i}(a_{ij}/a_{ii}) x_jy_j$ if
$l \leq k$. 
Then we can define an isomorphism
$(\mu^{-1}(0) \cap U_{\sigma})\times \A^d \to U_{\sigma}$ 
where $T$ acts trivially on the second factor by the formula
$$\begin{array}{ll} 
\left((x_1,\ldots ,x_n, y_1, \ldots , y_n), (z_1, \ldots , z_d)\right)  \mapsto   \\
 (x_1, \ldots , x_k, {z_{k+1}\over{y_{k+1}}}, \ldots, {z_d\over{y_d}}, x_{d+1},
\ldots x_n, {z_1\over{x_1}}, \ldots , {z_k\over{x_k}}, y_{k+1}, \ldots , y_n)
\end{array}$$

Now let $W \subset U_{\sigma(C,\theta)}$ be a non-empty irreducible
$\G_m^{2n}$-invariant closed subset transverse to $\mu^{-1}(0)$.
Then $W = U_{\sigma} \cap V(x_{i_1},\ldots , x_{i_l}, y_{j_1}, \ldots , y_{j_m})$ for some sets indices $\{i_1, \ldots , i_l\}, \{j_1, \ldots , j_m\}$,
with $\{i_1 \ldots , i_l\} \cap \{1, \ldots , k\} = \emptyset$ and
$\{j_1, \ldots j_m\} \cap \{ k+1, \ldots , d\} =\emptyset$.

If $\{i_1, \ldots, i_l\} \cap \{k+1, \ldots , d\} = \emptyset$ and 
$\{j_1, \ldots , j_m\} \cap
\{1, \ldots , d\} = \emptyset$ as well, then the argument used above 
shows that there is a $T$-equivariant isomorphism
$W \to (W\cap \mu^{-1}(0)) \times \A^d$.

If this is not the case we can assume, without loss of generality,
that $j_1 \in \{1, \ldots , d\}$ and after reordering coordinates we
have that $j_1 = 1$.  Let $V_1 \subset V$ be the submodule where $x_1
=0$ and let $A'$ be the matrix obtained by deleting the first column
of $A$.  Then $W$ is a closed subset of $\G_m \times (V_1 \times
V_1^*)$ and the hypertoric equations $\sum a_{ij}x_jy_j$ restrict on
$V_1 \times V_1^*$ to the equations for $\iy(A',\theta)$ in
$\ix(A'^\pm, \theta)$. The transversality assumption ensures that the
hypertoric equations do not degenerate on $W$. Therefore, by induction
on $\dim V$ we can conclude that the conclusion of the Lemma holds for
the embedding $\mu^{-1}(0) \cap W \hookrightarrow W$.
\end{proof}

\begin{proof}[Proof of Theorem \ref{thm.chowhypertoric}]
The proof is similar to the proof of \cite[Theorem 1.1]{HaSt:02}.
Since ${\mathbf X}(A^\pm, \theta)$ is a semi-projective toric variety
it is the toric variety of a fan $\Sigma_\theta$ in a lattice $N$ which has full dimensional support. 
A vector $v \in N$ determines a $1$-parameter subgroup $\lambda_v$
which acts on the toric variety ${\mathbf X}(A^\pm, \theta)$. If $v$ is
in the support of the fan $\Sigma_\theta$ then the action on $\bx(A^\pm, \theta)$
is filterable. This means we can order the fixed components 
for the action of $\lambda_v$ so that Bialynicki-Birula decomposition
of $\bx(A^\pm, \theta)$ with respect to this action gives a filtration
$$ \emptyset = \bU_0 \subset \bU_1 \subset \ldots \subset \bU_r = \bx(A^\pm,\theta)$$
such that for every $i \geq 1$, the set $\bU_i \smallsetminus \bU_{i-1}$
is a closed subset of $\bU_\sigma$ where $\sigma$ is a maximal cone in the fan of
$X$.

Now if $\lambda_v$ is the one-parameter subgroup corresponding to the sum 
of the generators for the rays in the fan $\Sigma_\theta$ then
$\lambda_v$ is the image of diagonal one-parameter subgroup 
of the torus $\G_m^{2n}$ under the quotient map $\G_m^{2n} \to \G_m^{2n-d}$
\cite[Proof of Lemma 6.5]{HaSt:02}.
Since the hypertoric equations are homogeneous in the variables
$x_1,\ldots , x_n, y_1, \ldots , y_n$
it follows that the quotient $\by(A,\theta)$ is invariant under the action
of $\lambda_v$.

As observed by Hausel and Sturmfels, the fixed loci for the $\lambda_v$
actions on $\bx(A^\pm, \theta)$ and $\by(A,\theta)$ are equal
since these two semi-projective varieties have a common core. 

The Bialynicki-Birula decomposition of $\by(A,\theta)$
gives a filtration
$$ \emptyset = \bV_0 \subset \bV_1 \subset \ldots \subset \bV_r = \by(A,\theta)$$
such that $\bV_i \smallsetminus \bV_{i-1}$ is closed in $\bU_i
\smallsetminus \bU_{i-1}$. Moreover, since the hypertoric equations
have positive weight with respect to the action of $\lambda_v$ the
codimension of a component of the Bialynicki-Birula decomposition
of $\by(A,\theta)$ in the corresponding component of the Bialynicki-Birula
decomposition of $\bx(A^\pm,\theta)$ is constant.

By construction of ${\mathbf X}(A^\pm, \theta)$ as a geometric
invariant theory quotient, the inverse image of the affine toric open
subset $\bU_\sigma$ is an open set $U_{\sigma(C,\theta)}$ for some
column basis $C$ of $A$. Likewise any torus-invariant closed subset
of $\bW \subset \bU_{\sigma}$ appearing in the filtration 
corresponds to a $\G_m^{2n}$ invariant
closed subset $W \subset U_{\sigma(C,\theta)}$ which is transverse to $\mu^{-1}(0)$.

By Proposition \ref{prop.crucial} there is a $T$-equivariant
isomorphism $(\mu^{-1}(0) \cap W) \times \A^d \to W$ where $T$ acts
trivially on second factor.  Hence the pullbacks $\iota^*$ and $i^*$
induce isomorphisms of (higher) Chow groups $\CH_{j}(\iu_{i+1}
\smallsetminus \iu_{i},k) \to \CH_{j-d}(\iv_{i+1} \smallsetminus
\iv_i,k)$ and $i^* \colon \CH_j(\bu_{i+1} \smallsetminus \bu_i,k) \to 
\CH_{j-d}(\bv_{i+1} \smallsetminus \bv_i, k)$ for all for
all non-negative integers $j,k$ where 
$\iu_i= [U_i/T]$ and $\iv_i = [V_i/T]$.  



Using induction on the stratifications
$\bU_{\cdot}$ and $\bV_{\cdot}$ 
and the localization long exact sequence for higher
Chow groups we see that pullback of (higher) Chow groups
$i^* \colon \CH_*(\bX(A^\pm,\theta),k) \to \CH_*(\bY(A,\theta),k)$
is an isomorphism. 

The same argument also works for the (higher) $K$-theory
of coherent sheaves.
\end{proof}

\begin{remark} Although this result is a Chow group analogue of 
Hausel and Sturmfels's earlier result on
  cohomology \cite{HaSt:02}, the Chow and cohomology groups of a singular
  toric variety need not be equal. 
Also observe that the same methods can be used to show that there are
corresponding isomorphisms in algebraic $K$-theory.
\end{remark}
\begin{remark}
  The proof of Theorem \ref{thm.chowhypertoric} makes crucial use of
  the fact that a Lawrence toric variety is projective over 
  the affine toric variety $\Spec (V \times V^*)^T$, since this condition
implies that it has a stratification by unions of torus orbits. It would be
  interesting to give an example where where the conclusion of Theorem
  \ref{thm.chowhypertoric} fails for a non-quasi projective quotient
  of an open set in $(V \times V^*)$ on which $T$ acts properly cf. \cite[Remark 3.3]{deCMM:14}.
\end{remark}

\begin{proof}[Proof of Corollary \ref{cor.hypertoricorbifold}]
Again, by Theorem \ref{thm.orbifoldproducts} it suffices to show that
the pullback $(I\iota)^* \colon \CH^*(I\ix(A^{\pm}, \theta)) \to
\CH^*(I\iy(A,\theta))$ is an isomorphism of abelian groups. 
Unlike the proof of \ref{cor.cotangent} this not immediate
because $\ix(A^\pm, \theta)$ and $\iy(A,\theta)$ are not vector
bundles over a common base.

To prove this isomorphism we fix some notation.
Let $X(A^\pm, \theta) = (V \times V^*)^s$ 
and $Y(A,\theta) = \mu^{-1}(0) \cap X(A^\pm, \theta)$. Then
$\ix(A^\pm, \theta) = [X(A^\pm, \theta)/T]$
and $\iy(A^\pm, \theta) = [Y(A^\pm, \theta)/T]$. 

Since $T$ is diagonalizable and acts properly 
$I_T(X(A^\pm, \theta)) = \coprod_{\{g \in T | |g| < \infty\}} X(A^\pm, \theta)^g$
and 
$$I\ix(A^\pm, \theta) = [I_T(X(A^\pm, \theta))/T] = 
\coprod_{\{g \in T | |g| < \infty\}} [X(A^\pm, \theta)^g/T]$$
and a similar statement holds for $I\iy(A,\theta)$. (Note that all disjoint
sums are finite since if $T$ acts properly on a space $X$, $X^g =\emptyset$
for all but finitely many $g \in T$.)

Observe that if $V$ is a $T$-module and $g \in T$ then $(V^*)^g =
(V^g)^*$ so $V \times V^*)^g = V^g \times (V^g)^*$ as $T$-modules.
Also, if $\theta$ is a character then $$(V^g \times (V^*)^g)^s = (V^g
\times (V^*)^g) \cap (V \times V^*)^s$$ where stability is take with
respect to the character $\theta$.  Since the action of $T$ on $V$ is
diagonalized, the submodule $V^g$ is obtained by setting coordinates
$x_{i_1}, \ldots x_{i_k}$ to zero. Hence, $X(A^\pm,\theta)^g =
X(A_g^\pm, \theta)$ where $A_g$ is the matrix obtained from $A$ by
deleting the $i_1, \ldots i_k$th columns. (Note that $k$ and the
integers $i_1, \ldots , i_k$ depend on $g$.)  Hence $I\ix(A^\pm,
\theta) = \coprod_{\{g \in T | |g| < \infty \}} \ix(A_g^\pm, \theta)$.
A similar argument shows that 
$I\iy =  \coprod_{\{g \in T | |g| < \infty \}} \iy(A_g, \theta)$.
Hence by Theorem \ref{thm.chowhypertoric} the pullback
$I\iota^* \colon \CH^*(I\ix(A^\pm, \theta)) \to \CH^*(I\iy(A,\theta))$
is an isomorphism.
\end{proof}
We conclude with a conjecture.
\begin{conjecture} If $\tilde{X}$ is a canonical (hence toric)
  resolution of singularities of the Lawrence toric variety
  $X(A,\theta)$ and $\tilde{Y} = Y(A,\theta)\times_{X(A,\theta)}
  \tilde{X}$ then the pullback $\CH^*(\tilde{X}) \to \CH^*(\tilde{Y})$ is
  an isomorphism.
\end{conjecture}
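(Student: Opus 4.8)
The plan is to reproduce the Bialynicki--Birula and localization argument of Theorem \ref{thm.chowhypertoric} one level higher, on the resolutions. A canonical toric resolution $\tilde X\to X(A,\theta)$ is a toric morphism, so $\tilde X$ is again a smooth semi-projective toric variety whose fan $\tilde\Sigma$ refines $\Sigma_\theta$ and has the same full-dimensional support; in particular the residual torus, and hence the one-parameter subgroup $\lambda_v$ with $v$ the sum of the ray generators of $\Sigma_\theta$, continues to act on $\tilde X$. Because a canonical resolution is functorial, hence equivariant, and because $Y(A,\theta)$ is $\lambda_v$-invariant in $X(A,\theta)$ (as in the proof of Theorem \ref{thm.chowhypertoric}), the fiber product $\tilde Y = Y(A,\theta)\times_{X(A,\theta)}\tilde X$ is a closed $\lambda_v$-invariant subvariety of $\tilde X$, and by Corollary \ref{cor.resolution} it is a resolution of $Y(A,\theta)$.

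First I would build two compatible filtrations. Since $\tilde X$ is semi-projective and $v$ lies in the support of $\tilde\Sigma$, the $\lambda_v$-action is filterable in the sense of \cite{HaSt:02}, yielding a Bialynicki--Birula filtration $\emptyset=\tilde{\bU}_0\subset\cdots\subset\tilde{\bU}_r=\tilde X$ whose successive strata $\tilde{\bU}_i\smallsetminus\tilde{\bU}_{i-1}$ are locally closed subsets of affine toric charts of $\tilde\Sigma$. Intersecting with the invariant subvariety $\tilde Y$ gives a compatible filtration $\tilde{\bV}_i=\tilde Y\cap\tilde{\bU}_i$ with $\tilde{\bV}_i\smallsetminus\tilde{\bV}_{i-1}$ closed in $\tilde{\bU}_i\smallsetminus\tilde{\bU}_{i-1}$. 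Granting that on each chart the inclusion $\tilde Y\hookrightarrow\tilde X$ is, up to the degree-$d$ shift, an isomorphism on (higher) Chow groups, the localization sequence \eqref{eq.localization} and induction on $r$ would then yield that $i^*\colon\CH_*(\tilde X)\to\CH_{*-d}(\tilde Y)$ is an isomorphism, exactly as in Theorem \ref{thm.chowhypertoric}; since $\tilde X$ and $\tilde Y$ are smooth the pullback is automatically a ring homomorphism.

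The hard part will be the analog of Proposition \ref{prop.crucial} on the charts of the resolution: one must show that on each affine chart $\tilde U_\tau$ of $\tilde\Sigma$ the embedding $\tilde Y\cap\tilde U_\tau\hookrightarrow\tilde U_\tau$ is a section of a trivial $\A^d$-bundle, of constant codimension $d$ across the strata. Two facts encourage this. The normal bundle of $Y(A,\theta)$ in $X(A,\theta)$ is trivial because the embedding is strong, and by Theorem \ref{thm.localsection} together with the functoriality of resolutions (Proposition \ref{prop.resolution}) the embedding $\tilde Y\hookrightarrow\tilde X$ is still \'etale-locally a section of a smooth morphism of codimension $d$. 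The difficulty is to promote this \'etale-local tubular structure to a product decomposition on the Zariski toric charts that is compatible with the torus-equivariant stratification. Downstairs the decomposition came from the explicit monomial form of the hypertoric equations in the charts $U_{\sigma(C,\theta)}$; after subdividing $\Sigma_\theta$ the pulled-back equations lose this form, and one must simultaneously control their transversality to the strict transform of $\mu^{-1}(0)$ on every chart. A related subtlety is the choice of $\lambda_v$: one needs a one-parameter subgroup that is generic for the refined fan $\tilde\Sigma$ and yet preserves $\tilde Y$, for which one would work inside the positive-dimensional subtorus of the residual torus that fixes $\mu^{-1}(0)$ (the hypertoric equations being homogeneous). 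Reconciling the functorial, \'etale-local tubular structure with the Zariski stratification demanded by the localization argument is exactly the gap that remains open, which is why the statement is posed as a conjecture rather than a theorem.
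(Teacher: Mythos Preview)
The statement is a \emph{conjecture}, not a theorem: the paper offers no proof, only the bare statement at the very end of Section~\ref{sec.hyperkahler}. So there is no ``paper's own proof'' to compare against.

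Your proposal is therefore not a proof but a strategy sketch, and you yourself diagnose the gap accurately in the final paragraph. The approach you outline---lift the Bialynicki--Birula filtration to the resolution $\tilde X$, intersect with $\tilde Y$, and run the localization argument of Theorem~\ref{thm.chowhypertoric}---is the natural one, and the obstruction you identify is exactly the point. Proposition~\ref{prop.crucial} relies on the explicit monomial form of the hypertoric equations in the charts $U_{\sigma(C,\theta)}$ to produce a \emph{Zariski-local} product decomposition $(\mu^{-1}(0)\cap W)\times\A^d\simeq W$; after a toric subdivision of $\Sigma_\theta$ the pulled-back equations are no longer monomial in the new chart coordinates, and the \'etale-local tubular structure supplied by Theorem~\ref{thm.localsection} and Proposition~\ref{prop.resolution} is not obviously enough to feed the Zariski/filtration machinery. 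Your remark about choosing a one-parameter subgroup generic for the refined fan while still preserving $\tilde Y$ is also on point. In short: your write-up is a correct account of why the statement is plausible and of where a proof gets stuck, which matches the paper's decision to leave it as a conjecture.
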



\begin{thebibliography}{EJK2}

\bibitem[AV]{AbVi:02}
Dan Abramovich and Angelo Vistoli, {\em Compactifying the space of stable
  maps}, J. Amer. Math. Soc. \textbf{15} (2002), no.~1, 27--75 (electronic).

\bibitem[Art]{Art:69}
M.~Artin, {\em Algebraic approximation of structures over complete local
  rings}, Inst. Hautes \'Etudes Sci. Publ. Math. (1969), no.~36, 23--58.

\bibitem[BD]{BiDa:00}
Roger Bielawski and Andrew~S. Dancer, {\em The geometry and topology of toric
  hyperk\"ahler manifolds}, Comm. Anal. Geom. \textbf{8} (2000), no.~4,
  727--760.

\bibitem[Blo]{Blo:86}
Spencer Bloch, {\em Algebraic cycles and higher {$K$}-theory}, Adv. in Math.
  \textbf{61} (1986), no.~3, 267--304.

\bibitem[CR]{ChRu:02}
Weimin Chen and Yongbin Ruan, {\em Orbifold {G}romov-{W}itten theory},
  Orbifolds in mathematics and physics ({M}adison, {WI}, 2001), Contemp. Math.,
  vol. 310, Amer. Math. Soc., Providence, RI, 2002, pp.~25--85.

\bibitem[dMM]{deCMM:14}
M.~A. {de Cataldo}, L.~{Migliorini}, and M.~{Mustata}, {\em {The combinatorics
  and topology of proper toric maps}}, ArXiv e-prints (2014).

\bibitem[EG]{EdGr:98}
Dan Edidin and William Graham, {\em Equivariant intersection theory}, Invent.
  Math. \textbf{131} (1998), no.~3, 595--634.

\bibitem[EJK1]{EJK:10}
Dan Edidin, Tyler~J. Jarvis, and Takashi Kimura, {\em Logarithmic trace and
  orbifold products}, Duke Math. J. \textbf{153} (2010), no.~3, 427--473.

\bibitem[EJK2]{EJK:12a}
\bysame, {\em {A plethora of inertial products}}, Annals of K-theory, to appear
  (2014).

\bibitem[Ful]{Ful:84}
William Fulton, {\em Intersection theory}, Springer-Verlag, Berlin, 1984.

\bibitem[EGA4]{EGA4}
A.~Grothendieck and J.~Dieudonn\'e, {\em \'{E}lements de {G}\'eom\'etrie
  {A}lg\'ebrique {I}{V}. \'{E}tude locale des schemas et des morphismes de
  sch\'emas}, Inst. Hautes \'Etudes Sci. Publ. Math. No. \textbf{20, 24, 28,
  32} (1964, 1965, 1966, 1967).

\bibitem[HS]{HaSt:02}
Tam{\'a}s Hausel and Bernd Sturmfels, {\em Toric hyper{K}\"ahler varieties},
  Doc. Math. \textbf{7} (2002), 495--534 (electronic).

\bibitem[JKK]{JKK:07}
Tyler~J. Jarvis, Ralph Kaufmann, and Takashi Kimura, {\em Stringy {K}-theory
  and the {C}hern character}, Invent. Math. \textbf{168} (2007), no.~1, 23--81.

\bibitem[JT1]{JiTs:08a}
Yunfeng Jiang and Hsian-Hua Tseng, {\em Note on orbifold {C}how ring of
  semi-projective toric {D}eligne-{M}umford stacks}, Comm. Anal. Geom.
  \textbf{16} (2008), no.~1, 231--250.

\bibitem[JT2]{JiTs:08}
\bysame, {\em The orbifold {C}how ring of hypertoric {D}eligne-{M}umford
  stacks}, J. Reine Angew. Math. \textbf{619} (2008), 175--202.

\bibitem[Kol]{Kol:07}
J{\'a}nos Koll{\'a}r, {\em Lectures on resolution of singularities}, Annals of
  Mathematics Studies, vol. 166, Princeton University Press, Princeton, NJ,
  2007.

\bibitem[Kre]{Kre:99a}
Andrew Kresch, {\em Cycle groups for {A}rtin stacks}, Invent. Math.
  \textbf{138} (1999), no.~3, 495--536.

\bibitem[Lev]{Lev:01}
Marc Levine, {\em Techniques of localization in the theory of algebraic
  cycles}, J. Algebraic Geom. \textbf{10} (2001), no.~2, 299--363.

\bibitem[Low]{Low:15}
Daniel Lowengrub, {\em A cancellation theorem for {S}egre classes},
  arxiv:1503.01569.

\bibitem[Pro1]{Pro:11}
Nicholas Proudfoot, {\em All the {GIT} quotients at once}, Trans. Amer. Math.
  Soc. \textbf{363} (2011), no.~4, 1687--1698.

\bibitem[Pro2]{Pro:08}
Nicholas~J. Proudfoot, {\em A survey of hypertoric geometry and topology},
  Toric topology, Contemp. Math., vol. 460, Amer. Math. Soc., Providence, RI,
  2008, pp.~323--338.

\bibitem[Sum]{Sum:74}
Hideyasu Sumihiro, {\em Equivariant completion}, J. Math. Kyoto Univ.
  \textbf{14} (1974), 1--28.

\bibitem[Tho]{Tho:86d}
R.~W. Thomason, {\em Comparison of equivariant algebraic and topological
  {$K$}-theory}, Duke Math. J. \textbf{53} (1986), no.~3, 795--825.

\end{thebibliography}
\def\cprime{$'$} \def\cprime{$'$} \def\cprime{$'$}

\end{document}